\documentclass{article}
\usepackage{graphicx} 
\usepackage{amsmath,amsfonts,amssymb}
\usepackage{amsthm}
\usepackage{mathtools}
\usepackage{mathrsfs}
\usepackage{hyperref}
\usepackage[mathscr]{euscript}
\usepackage{enumerate}
\usepackage{authblk}
\newtheorem{theorem}{Theorem}[section]
\newtheorem{theorema}{Theorem}

\newtheorem{corollary}[theorem]{Corollary}
\newtheorem{lemma}[theorem]{Lemma}
\theoremstyle{definition}
\newtheorem{definition}{Definition}[section]
\newtheorem{example}{Example}
\newtheorem*{counterexample}{Counterexample}
\theoremstyle{remark}
\newtheorem{remark}{Remark}
\author[1]{Sonam Mehta}
\author[2]{Kuldeep Singh Charak}

\affil[1]{Department of Mathematics, University of Jammu \\ \href{mailto:mehtasonam2020@gmail.com}{mehtasonam2020@gmail.com}}
\affil[2]{Department of Mathematics, University of Jammu \\ \href{mailto:kscharak7@rediffmail.com}{kscharak7@rediffmail.com}}
\title{Normality criterion for a family of holomorphic curves that partially share wandering hyperplanes with their derivatives, and holomorphic functions lifted to curves in $\PtwoC$}
\date{}
\newcommand{\C}{\mathbb{C}}
\newcommand{\Pro}{\mathbb{P}}
\newcommand{\PnC}{\Pro^N(\C)}
\newcommand{\PtwoC}{\Pro^2(\C)}
\newcommand{\scF}{\mathcal{F}}
\newcommand{\D}{\mathcal{D}}
\begin{document}
\maketitle
\begin{abstract}
    In this paper we  generalize a result  of Ye, Pang and Yang \cite{ye2015extension} on the normality of a family of holomorphic curves in $\Pro^N(\C)$. Further we obtain a normality criteria for family of meromorphic functions that partially share wandering holomorphic functions with their derivatives. We also devise a tractable representation of complex valued holomorphic functions on $D$ as functions from $D$ to $\Pro ^2(\C)$ to obtain a normality criterion that leads to a counterexample to the converse of Bloch's principle.
\end{abstract}
\section{Introduction}
 A family $\scF$ of holomorphic functions on a domain $D$ in the complex plane is said to be normal if any sequence $\{f_n\} \subset \scF$ has a subsequence  which converges locally uniformly in $D$ to an analytic function $f$ or a subsequence which converges locally uniformly in $D$ to $\infty$. Normality for families of meromorphic function is defined likewise with convergence of the subsequence required in the spherical metric of the Riemann sphere.

 A central result in the theory of normal families is the \textit{Crit\'ere fondamental} proved by Montel  in 1912. 
 \begin{theorema}[\cite{montel1912familles}]\label{FNT}
     Let $\scF$ be a family of meromorphic functions on a domain $D\subseteq \mathbb{C}$ which omit three distinct values $a.b,c$ in $\hat{\C}$. Then $\scF$ is normal in $D$.
 \end{theorema}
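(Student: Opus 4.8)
The plan is to reduce to a normalized situation and then exploit the elliptic modular function. First I would post-compose every $f\in\scF$ with a Möbius transformation $T$ carrying $a,b,c$ to $0,1,\infty$; since $T$ is a biholomorphism of $\hat{\C}$ it preserves normality and non-normality, so it suffices to treat a family $\scF$ of \emph{holomorphic} functions on $D$ that omit the two finite values $0$ and $1$.

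Next I would bring in the universal covering map $\lambda\colon\mathbb{D}\to\C\setminus\{0,1\}$ given by the elliptic modular function, where $\mathbb{D}$ is the unit disk. Fix a sequence $\{f_n\}\subset\scF$ and a point $z_0\in D$. On a disk $B\subseteq D$ containing $z_0$, simple connectivity lets me lift each $f_n|_B$ through $\lambda$ to a holomorphic $g_n\colon B\to\mathbb{D}$ with $\lambda\circ g_n=f_n$ and $g_n(z_0)$ in a fixed fundamental domain. The family $\{g_n\}$ is uniformly bounded, hence by the elementary Montel theorem for locally bounded families some subsequence converges locally uniformly on $B$ to a holomorphic $g\colon B\to\overline{\mathbb{D}}$. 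By the maximum principle either $g(B)\subset\mathbb{D}$, in which case $f_n=\lambda\circ g_n$ converges locally uniformly on $B$ to $\lambda\circ g$, or $g\equiv\zeta$ for some $\zeta\in\partial\mathbb{D}$, in which case (using that $\lambda$ sends the appropriate boundary approaches to its three punctures) $f_n$ converges locally uniformly on $B$ to one of the constants $0,1,\infty$. In either case one obtains on $B$ a locally uniform limit that is a holomorphic map into $\hat{\C}$, possibly the constant $\infty$.

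Finally I would globalize via exhaustion and diagonalization: write $D$ as an increasing union of relatively compact subdomains, each covered by finitely many such disks; extract nested subsequences and patch the local limits using the monodromy theorem together with the fact that the deck transformations of $\lambda$ are hyperbolic isometries of $\mathbb{D}$, so that ``interior convergence'' and ``escape to $\partial\mathbb{D}$'' propagate consistently across overlapping disks within a connected piece of $D$. The patched limit is a meromorphic function on $D$ or the constant $\infty$; undoing $T$ yields the required normality of $\scF$ in the spherical metric.

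The hard part will be this globalization over a possibly multiply connected $D$: one must verify that the branch choices made on different disks are compatible along chains of overlaps and that the dichotomy ``limit inside $\mathbb{D}$ versus limit on $\partial\mathbb{D}$'' cannot switch from disk to disk within a connected component of the overlap structure. An alternative that bypasses the covering-space bookkeeping is the Zalcman rescaling lemma combined with Picard's little theorem: were $\scF$ not normal at some point, a rescaling argument would produce a nonconstant meromorphic function on $\C$ omitting $0,1,\infty$, a contradiction. I would present the modular-function argument as the primary route and note this shortcut as an alternative.
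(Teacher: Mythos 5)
The paper offers no proof to compare against: Theorem~\ref{FNT} is Montel's classical criterion, quoted with a citation to Montel's 1912 paper, so your argument has to stand on its own. In your primary (modular-function) route there is a genuine gap at the dichotomy step. If the lifted sequence $g_n$ converges locally uniformly to a unimodular constant $\zeta$, you cannot conclude that $f_n=\lambda\circ g_n$ converges to one of $0,1,\infty$: the modular function $\lambda$ has no continuous extension to $\partial\mathbb{D}$; it has limits $0,1,\infty$ only at the cusps (a countable boundary set) and only along approach regions inside suitable fundamental-domain translates, while for $z\neq z_0$ the points $g_n(z)$ need not lie in the fundamental domain you fixed, so $\lambda(g_n(z))$ can oscillate over all of $\C\setminus\{0,1\}$ even though $|g_n(z)|\to 1$. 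The standard repair is to split according to the base values: if (a subsequence of) $f_n(z_0)$ stays in a compact subset of $\C\setminus\{0,1\}$, then the chosen lifts satisfy $g_n(z_0)\in K$ for a compact $K\subset\mathbb{D}$, the limit has $g(z_0)\in\mathbb{D}$, the boundary-constant alternative is excluded by the maximum principle, and your argument closes; if instead $f_n(z_0)\to 0,1$ or $\infty$, a separate argument is needed (Schottky's theorem, or estimates of the hyperbolic metric of the thrice-punctured sphere near the punctures) to show $f_n$ tends to that constant locally uniformly in the spherical metric. By contrast, the difficulty you single out --- patching branch choices over a multiply connected $D$ --- is not the real issue: normality is local, so it suffices to work on disks and then use a countable exhaustion with a diagonal extraction; no monodromy bookkeeping on $D$ is required.

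The Zalcman-plus-Picard shortcut you mention only in passing is, as stated, a complete proof (by Hurwitz the rescaled limit again omits $a,b,c$ unless it is constant, so Picard's theorem gives the contradiction), and it is also the approach closest in spirit to this paper, whose main theorem is proved by exactly that template via the Aladro--Krantz rescaling lemma (Lemma~\ref{Aladro Krantz}) and the Picard-type result for hyperplanes (Lemma~\ref{picard extension}). If you want a self-contained and rigorous proof of Theorem~\ref{FNT}, promote that alternative to the main argument, or else supply the Schottky/hyperbolic-metric step missing from the modular-function route.
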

 
 Recall that two nonconstant meromorphic functions $f$ and $g$ on a domain $D$ \textit{share} the value $a \in \hat{\C}$ if $\{f^{-1}(a)\} = \{g^{-1}(a)\}$ 
 (ignoring multiplicity). \mbox{W. Schwick} in 1992, drew a connection between  normality and shared values with the following theorem:
 \begin{theorema}[\cite{schwick1992sharing}]\label{Schwick theorem}
    Let $\scF$ be a family of meromorphic functions on a domain $D\subseteq \mathbb{C}$, and $a,b,c$ be three distinct finite complex numbers. If for every $f \in \scF$ , $f$ and $f'$ share the values $a,b,c$ , then $\scF$ is normal in $D$. 
 \end{theorema}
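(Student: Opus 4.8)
The plan is to argue by contradiction, the sole non-elementary input being Zalcman's rescaling lemma. Suppose $\scF$ were not normal at some point $z_0 \in D$. Then Zalcman's lemma supplies functions $f_n \in \scF$, points $z_n \to z_0$, and positive reals $\rho_n \to 0$ such that
\[
g_n(\zeta) := f_n(z_n + \rho_n\zeta) \longrightarrow g(\zeta)
\]
locally uniformly on $\C$ in the spherical metric, with $g$ a non-constant meromorphic function on $\C$. Differentiating and using Weierstrass' theorem (applied to $g_n$, or to $1/g_n$ near a pole of $g$), we also have $g_n'(\zeta) = \rho_n f_n'(z_n + \rho_n\zeta) \to g'(\zeta)$ locally uniformly on $\C$ off the poles of $g$. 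From here on only Hurwitz's theorem and Picard's theorem are needed.

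First I would verify that each of $g-a$, $g-b$, $g-c$ has only multiple zeros. If $g(\zeta_0) = a$, then, since $g \not\equiv a$, Hurwitz gives $\zeta_n \to \zeta_0$ with $g_n(\zeta_n) = a$; with $w_n := z_n + \rho_n\zeta_n$ this says $f_n(w_n) = a$, so $f_n'(w_n) = a$ too by the sharing hypothesis, i.e.\ $g_n'(\zeta_n) = \rho_n a$. As $\zeta_0$ is not a pole of $g$, letting $n \to \infty$ gives $g'(\zeta_0) = 0$, so $\zeta_0$ is a zero of $g-a$ of order at least $2$. The same works for $b$ and $c$.

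The heart of the matter is to show that $g'$ has no zeros. Suppose $g'(\eta_0) = 0$. Then $\eta_0$ is not a pole of $g$, so $g_n' - \rho_n a \to g'$ locally uniformly near $\eta_0$, and Hurwitz produces $\eta_n \to \eta_0$ with $g_n'(\eta_n) = \rho_n a$, i.e.\ $f_n'(w_n) = a$ where $w_n := z_n + \rho_n\eta_n$. By the sharing hypothesis $f_n(w_n) = a$, hence $g_n(\eta_n) = f_n(w_n) = a$, and letting $n \to \infty$ gives $g(\eta_0) = a$. Running the same argument with $b$ in place of $a$ gives $g(\eta_0) = b$, which is impossible since $a \ne b$. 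Therefore $g'$ is zero-free.

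Finally I would assemble the contradiction: since $g'$ never vanishes, every zero of $g - a$ would be simple, but $g-a$ has only multiple zeros, so $g-a$ has none, i.e.\ $g$ omits $a$; likewise $g$ omits $b$ and $c$. Thus the non-constant meromorphic function $g$ on $\C$ omits three distinct values, contradicting Picard's theorem, and so $\scF$ must be normal on $D$. I expect the only genuine care required to be the bookkeeping in the two Hurwitz arguments — keeping the relevant limit points away from the pole set of $g$ and tracking exactly where the normalizing factors $\rho_n$ enter — together with the harmless observation that the zero-free step uses only two of the three shared values at a time, so that the case $0 \in \{a,b,c\}$ needs no separate treatment.
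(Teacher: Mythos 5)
The paper never proves this statement: it is quoted as background (Theorem B) with a citation to Schwick, so there is no internal proof to compare against; the only related argument in the paper is the Aladro--Krantz rescaling proof of its own main theorem for curves in $\PnC$. Your argument is correct, and it is the standard Zalcman-rescaling proof of Schwick's theorem, genuinely different from Schwick's original Nevanlinna-theoretic approach. The two Hurwitz steps are sound: in each case the point in question is a finite $a$-point of $g$ or a zero of $g'$, hence not a pole of $g$, so spherical convergence upgrades to ordinary locally uniform convergence of $g_n$ and then of $g_n'$, and the perturbed targets $\rho_n a$, $\rho_n b$ tend to $0$, so Hurwitz applies to $g_n'-\rho_n a\to g'$ (note $g'\not\equiv 0$ because $g$ is nonconstant). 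Both directions of the sharing hypothesis are used exactly where needed: $f_n(w)=a\Rightarrow f_n'(w)=a$ to make every $a$-point of $g$ multiple, and $f_n'(w)=a\Rightarrow f_n(w)=a$ to show $g'$ is zero-free, and your remark that the latter step compares only two values at a time correctly disposes of the case $0\in\{a,b,c\}$. Combining ``all $a$-, $b$-, $c$-points of $g$ are multiple'' with ``$g'$ never vanishes'' forces $g$ to omit $a,b,c$, and Picard gives the contradiction. What the rescaling route buys over the original proof is brevity and the avoidance of value-distribution machinery; it is also the natural one-dimensional model of the projective-space argument this paper actually carries out for its own results.
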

 Soon enough the  Theorem \ref{FNT} was generalized to the case of sharing of values by Sun \cite{sun1994shared} where instead of omitting the three values it is only required that the three distinct values be shared by all the functions of the family.

 Theorem \ref{FNT} has also been extended to the family of holomorphic maps into complex projective space as follows:
 \begin{theorema}[\cite{dufresnoy1944theorie}]\label{dufresnoy theorem}
  Let $\scF$ be a family of holomorphic maps of a domain $D$ into  $\Pro^N(\C)$. Let $H_1,H_2, ......, H_{2N+1}$ be hyperplanes in $\Pro^N(\C)$  in general position. If for each $f \in \scF$, $f$ omits  $H_1,H_2, ......, H_{2N+1}$, then $\scF$ is normal in $D$.
 \end{theorema}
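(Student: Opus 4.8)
The plan is to reduce Theorem~\ref{dufresnoy theorem}, through a chain of normalizations, to the statement that an entire holomorphic curve omitting $2N+1$ hyperplanes in general position must be constant --- a statement generalizing Picard's theorem, whose one-variable normal-family counterpart is precisely Montel's Theorem~\ref{FNT}.

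First I would use that normality is a local property, so it suffices to prove $\scF$ normal on every disk $\Delta\Subset D$, and $\Delta$ may be taken simply connected. A projective automorphism $T\in PGL_{N+1}(\C)$ affects neither normality nor the omission hypothesis, and $PGL_{N+1}(\C)$ acts transitively on ordered $(N+1)$-tuples of hyperplanes in general position; hence, replacing $\scF$ by $\{T\circ f:f\in\scF\}$, I may assume $H_i=\{z_{i-1}=0\}$ for $i=1,\dots,N+1$ and $H_{N+1+k}=\{\ell_k=0\}$ with $\ell_k=\sum_{i=0}^{N}c_{k,i}z_i$ for $k=1,\dots,N$. General position of the whole configuration then becomes the purely linear statement that every $(N+1)$-element subset of $\{z_0,\dots,z_N,\ell_1,\dots,\ell_N\}$ is a basis of $(\C^{N+1})^{\ast}$; in particular each coefficient $c_{k,i}$ is nonzero.

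On the simply connected $\Delta$ every $f\in\scF$ admits a reduced holomorphic representation $f=[f_0:\cdots:f_N]$, the $f_i$ holomorphic with no common zero and unique up to a nowhere-vanishing factor. Omission of $\{z_{i-1}=0\}$ forces each $f_{i-1}$ to be zero-free, and omission of $H_{N+1+k}$ forces $\ell_k(f_0,\dots,f_N)$ to be zero-free. Now take $\{f_n\}\subset\scF$, choose reduced representations $(f_{n,0},\dots,f_{n,N})$, fix a compact $K\subset\Delta$ with nonempty interior, and --- exploiting the freedom of a constant factor --- normalize so that $\max_{0\le i\le N}\sup_K|f_{n,i}|=1$. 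The functions $f_{n,i}$ are then uniformly bounded on $K$, so by Montel's theorem in its locally-bounded form (which follows from Theorem~\ref{FNT}) some subsequence satisfies $f_{n,i}\to\phi_i$ locally uniformly on $\operatorname{int}K$; by Hurwitz's theorem each $\phi_i$ is either zero-free or identically $0$. If some $\phi_j\not\equiv 0$, then it is zero-free, so $(\phi_0,\dots,\phi_N)$ has no common zero and $f_n=[f_{n,0}:\cdots:f_{n,N}]\to[\phi_0:\cdots:\phi_N]$ locally uniformly in $\PnC$ on $\operatorname{int}K$; exhausting $D$ by such compacta and diagonalizing then yields a subsequence converging locally uniformly on $D$, so $\scF$ is normal.

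The remaining case --- and, I expect, the main obstacle --- is $\phi_0\equiv\cdots\equiv\phi_N\equiv 0$: the mass of the reduced representations escapes to $\partial K$, equivalently $\{f_n\}$ fails to be equicontinuous, in the chordal metric of $\PnC$, at some point of $\operatorname{int}K$. It is here that all $2N+1$ hyperplanes must be used. The route I would take is a Zalcman--Brody rescaling for holomorphic curves: non-equicontinuity at a point produces $z_n\to z_\ast$ and $\rho_n\to 0^{+}$ with $f_n(z_n+\rho_n\zeta)$ converging locally uniformly on $\C$ to a \emph{nonconstant} holomorphic curve $F\colon\C\to\PnC$; since each $f_n$ omits $H_1,\dots,H_{2N+1}$, applying Hurwitz to the coordinate functions of the rescaled reduced representations shows $F$ omits $H_1,\dots,H_{2N+1}$ as well. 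But a holomorphic curve $\C\to\PnC$ omitting $2N+1$ hyperplanes in general position is necessarily constant --- Borel's theorem, whose case $N=1$ is Picard's little theorem, the Liouville-type counterpart of Montel's Theorem~\ref{FNT} --- a contradiction. (Alternatively, staying closer to Dufresnoy's original argument, one can bypass both the rescaling and Borel's theorem by bounding $\log\max_i|f_{n,i}|$ through a Harnack/subharmonic estimate extracted from the omission conditions, which is incompatible with the escape of mass.) Hence the escape case is impossible, case one above always occurs, and $\scF$ is normal.
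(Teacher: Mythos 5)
The paper itself gives no proof of Theorem~\ref{dufresnoy theorem} (it is quoted from Dufresnoy), but the engine of your second half --- rescaling plus a Borel-type degeneracy theorem --- is exactly the machinery the paper uses elsewhere (Lemma~\ref{Aladro Krantz} and Lemma~\ref{picard extension} in the proof of Theorem~\ref{Generalization YPY}), so the overall strategy is sound. Your write-up, however, has a genuine structural gap: the dichotomy you build around the normalization $\max_i\sup_K|f_{n,i}|=1$ does not do what you claim. The assertion that ``$\phi_0\equiv\cdots\equiv\phi_N\equiv 0$'' is \emph{equivalent} to failure of equicontinuity of $\{f_n\}$ at a point of $\operatorname{int}K$ is false: a reduced representation is only determined up to a nowhere-zero holomorphic factor, and an adversarial choice of that factor can make all components tend to $0$ on $\operatorname{int}K$ while the maps into $\PnC$ converge perfectly well. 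For instance, with $N=1$, $K=\overline{\mathbb{D}}$ inside a larger disk, the representations $\bigl(e^{n(z-1)},\tfrac12 e^{n(z-1)}\bigr)$ of the single constant map $[1:\tfrac12]$ (which omits three hyperplanes in general position) satisfy your normalization and yet have $\phi_0\equiv\phi_1\equiv 0$ on the open disk. Hence your ``escape case'' is not impossible, and declaring it impossible is where the argument breaks: from the vanishing of the $\phi_i$ you cannot extract non-normality, so you are not entitled to the rescaling. The repair is to drop the case analysis entirely and argue by contradiction from the outset: if $\scF$ is not normal, Lemma~\ref{Aladro Krantz} directly supplies $z_n\to z_0$, $\rho_n\to 0^+$ and a nonconstant limit curve $g:\C\to\PnC$, and the first half of your proof (the $PGL$ normalization, the zero-free components, the sup-normalization on $K$) becomes unnecessary.

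A second, smaller gap sits in the Hurwitz step. From $\langle\Tilde{f_n},H_j\rangle$ nowhere zero you may conclude, for the limit curve $g$, only that $\langle\Tilde{g},H_j\rangle$ is either nowhere zero or identically zero; the limit may well be \emph{contained} in some of the hyperplanes rather than omit them. So Borel's theorem in the form you quote (``a curve omitting $2N+1$ hyperplanes in general position is constant'') does not cover all cases. What you need is precisely the degenerate form recorded as Lemma~\ref{picard extension} (Ru, p.~141): if for each $j$ either $g(\C)\subset H_j$ or $g(\C)$ omits $H_j$, with the $2N+1$ hyperplanes in general position, then $g$ is constant. With that substitution, and with the case analysis replaced by the direct contradiction via Lemma~\ref{Aladro Krantz}, your argument closes and coincides with the standard modern proof of Dufresnoy's theorem.
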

 The requirement of omission $2N + 1$ hyperplanes in Theorem \ref{dufresnoy theorem}
 was reduced to merely sharing of such hyperplanes by Yang et al \cite{yang2014normal}. An adequate extension of the theorem  \ref{Schwick theorem} to the case of holomorphic maps into complex projective space could only be achieved after a proper extension of the notion of derivatives for such maps which was accomplished by Ye et al \cite{ye2015extension}.

Let $f$ and $g$ be  two meromorphic functions on a domain $D \subseteq \C$. Then $f$ and $g$ are said to share a function $\alpha$, in $D$, ignoring multiplicities, if $f(z) - \alpha(z) = 0 \iff g(z) - \alpha(z) = 0$. If  $f(z) - \alpha(z) = 0 \implies g(z) - \alpha(z) = 0$ then we say that $f$ and $g$ partially share $\alpha$ in $D$.  

 In 2014 Grahl and Nevo, using a   simultaneous rescaling version of the Zalcman Lemma, obtained a normality criterion for a family of meromorphic functions whose each member shared three functions  with its derivative. 
\begin{theorema}\label{Grahl and Nevo}[\cite{grahl2014exceptional}]
 Let $\scF$ be a family of functions meromorphic in the unit disk  $\mathbb{D} = \{z \in \C: |z| < 1\}$ and $\epsilon >0$. Assume that for each $f \in \scF$ there exist functions $a_1^{(f)}, a_2^{(f)}, a_3^{(f)}$ that are either meromorphic or identically infinity in $\mathbb{D}$, and do not have any common poles with $f$, such that $f$ and $f'$ share the functions $a_1^{(f)}, a_2^{(f)}, a_3^{(f)}$, and such that
 $$\sigma(a_j^{(f)(z)}, a_k^{(f)(z)}) \geq \epsilon \ \forall z \in \mathbb{D}, \ \text{j,k} \ \in \{1,2,3\} \ \text{with} \ j \neq k$$
 Then $\scF$ is normal in $\mathbb{D}.$
\end{theorema}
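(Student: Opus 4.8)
Since normality is a local property, it suffices to prove that $\scF$ is normal at each point of $\mathbb{D}$, so I would suppose, for contradiction, that $\scF$ fails to be normal at some $z_0\in\mathbb{D}$. The plan is to run the usual Schwick-type machine — rescale, produce a nonconstant meromorphic limit function on $\C$, and show that the shared-value hypothesis forces that limit to omit too many values — but with one genuine new difficulty: the shared functions $a_1^{(f)},a_2^{(f)},a_3^{(f)}$ move with $f$, so one cannot simply pass to a limit of them. The only thing holding them in check is the uniform spherical separation $\sigma(a_j^{(f)},a_k^{(f)})\ge\epsilon$, and making that do the work is the heart of the argument.

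\emph{Step 1 (simultaneous rescaling).} I would invoke the Pang--Zalcman rescaling lemma in a \emph{simultaneous} form: non-normality at $z_0$ yields $f_n\in\scF$, points $z_n\to z_0$ and scales $\rho_n\to0^+$ such that $g_n(\zeta):=\rho_n^{-\ell}f_n(z_n+\rho_n\zeta)$ converges, locally uniformly on $\C$ in the spherical metric, to a nonconstant meromorphic function $g$ of finite order, where $\ell$ is the exponent adapted to a ``share with the derivative'' condition. \emph{Simultaneously} one must control the rescaled shared functions $A_j^{(n)}(\zeta):=a_j^{(f_n)}(z_n+\rho_n\zeta)$: since the triples $(a_1^{(f_n)}(z_n),a_2^{(f_n)}(z_n),a_3^{(f_n)}(z_n))$ lie in the compact subset of $(\hat{\C})^3$ carved out by $\sigma(\cdot,\cdot)\ge\epsilon$, after passing to a subsequence they converge to a triple $(\alpha_1,\alpha_2,\alpha_3)$ of \emph{pairwise distinct} points of $\hat{\C}$ with $\sigma(\alpha_i,\alpha_j)\ge\epsilon$; and then — using the $\epsilon$-separation together with the restriction to the shrinking disks $\{|\zeta|\le R\}$ (and the Montel normality that separation from $\infty$ gives whenever the corresponding $a_j^{(f)}$ is not identically $\infty$) — one arranges $A_j^{(n)}\to\alpha_j$ locally uniformly on $\C$.

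\emph{Step 2 (rigidity of the limit).} Now read the sharing hypothesis through the rescaling. On the $f$-side the ``value'' is $A_j^{(n)}$, while the rescaling inserts a factor of size $O(\rho_n)$ on the $f'$-side; hence a zero of $g_n-A_j^{(n)}$ is a zero of a function converging to $g'$ whose value tends to $0$, and letting $n\to\infty$ via Hurwitz gives: every finite $\alpha_j$-point of $g$ is a zero of $g'$ (and $g$ has only multiple poles if $\alpha_j=\infty$). Conversely, if $g'(\zeta_0)=0$ for some $\zeta_0$ — so $g(\zeta_0)$ is finite — then for each index $j$ with $\alpha_j$ finite, Hurwitz applied to the reverse-sharing equation $g_n'=(\text{const})\,\rho_n A_j^{(n)}$ produces points $\zeta_n\to\zeta_0$ at which the reverse direction of the sharing forces $g_n(\zeta_n)=A_j^{(n)}(\zeta_n)$, whence $g(\zeta_0)=\alpha_j$; as the $\alpha_j$ are pairwise distinct this is impossible, so $g'$ has no zeros at all. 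Combining the two conclusions, $g-\alpha_j$ has \emph{no} zeros for each finite $\alpha_j$ (its only zeros would be zeros of $g'$). Thus $g$ omits the finite values among $\alpha_1,\alpha_2,\alpha_3$, and a short supplementary argument — using the no-common-poles hypothesis and, when one $\alpha_j=\infty$, the local boundedness (hence normality) of the other two shared functions — upgrades this to: $g$ omits, or is totally ramified over, three pairwise distinct values of $\hat{\C}$, and in fact $g$ has at most multiple poles.

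\emph{Step 3 (contradiction).} A nonconstant meromorphic function on $\C$ cannot omit three distinct values of $\hat{\C}$ (Picard's theorem, equivalently Theorem~\ref{FNT} applied to the single function $\{g\}$). In the remaining configuration one combines what Step~2 gives — $g$ omits two finite values and has only multiple poles, while $g'$ has no zeros — with the Second Main Theorem of Nevanlinna: the vanishing of $\bar N(r,1/(g-\alpha_i))$ for the two finite $\alpha_i$ together with $\bar N(r,g)\le\tfrac12 N(r,g)$ forces $T(r,g)=S(r,g)$, which is impossible for nonconstant $g$. Either way we contradict Step~1, so $\scF$ is normal in $\mathbb{D}$.

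\emph{Main obstacle.} The delicate point is Step~1: a priori there is \emph{no} control on the wandering functions $a_j^{(f_n)}$ on the shrinking disks, and the whole argument stands or falls on upgrading the ordinary Zalcman--Pang lemma to one that simultaneously pins down $A_j^{(n)}\to\alpha_j$ (and copes with the alternative that some $a_j^{(f)}$ is identically $\infty$). Once that is in hand, Steps~2 and~3 are a standard, if fiddly, value-distribution argument, the fiddliness concentrated in the degenerate cases $\alpha_j\in\{0,\infty\}$ and in ruling out affine limit functions.
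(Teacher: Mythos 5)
You should first be aware that the paper does not prove Theorem~\ref{Grahl and Nevo} at all: it is quoted from \cite{grahl2014exceptional} as background, and the paper's own machinery (Theorem~\ref{Generalization YPY}, specialized through Theorem~\ref{Similar to Grahl result 1}) only recovers the strictly weaker Corollary~\ref{Similar to Grahl result 2}, in which the wandering shared functions must be bounded holomorphic functions with Euclidean (not spherical) separation. So your plan is really being measured against the original Grahl--Nevo argument, whose outline (a \emph{simultaneous rescaling} version of the Zalcman lemma followed by a Schwick-type value-distribution argument) you have correctly identified.

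The genuine gap is exactly the point you flag as the ``main obstacle'' and then do not close: Step~1. You invoke a simultaneous Pang--Zalcman lemma and claim $A_j^{(n)}\to\alpha_j$ locally uniformly on $\C$, but the justification offered -- compactness of the set of $\epsilon$-separated triples in $(\hat{\C})^3$ -- only controls the three values at the single points $z_n$; it gives no equicontinuity of $a_j^{(f_n)}$ on the shrinking disks $\{|z-z_n|\le R\rho_n\}$, and your parenthetical Montel argument applies only in the special case where one of the three functions is identically $\infty$ (only then are the other two uniformly separated from $\infty$, hence bounded). In general the $\epsilon$-separation is pairwise \emph{between the three functions}, not from any fixed value, so establishing normality of the families $\{a_j^{(f)}\}$ (equivalently, of the triples), which is what makes the rescaled wandering functions converge to constants, is precisely the nontrivial lemma Grahl and Nevo had to prove before rescaling; asserting it leaves Steps~2--3 without input. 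There are also secondary soft spots: the exponent $\ell$ in the rescaling is left unspecified in Step~1 while Step~2 tacitly computes with $\ell=0$; and in the degenerate case $\alpha_j=\infty$ your claim that $g$ has only multiple poles is unsupported -- if $a_j^{(f)}\equiv\infty$ the sharing hypothesis for that index is vacuous (poles of $f$ are automatically poles of $f'$), so the Second Main Theorem estimate in Step~3, which needs $\bar N(r,g)\le\tfrac12 N(r,g)$, does not close in that configuration without a further idea.
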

The functions $a_1^{(f)}, a_2^{(f)}, a_3^{(f)}$ are called \textit{wandering shared functions} as they vary for different functions of the family $\scF.$ After proving our first main result we shall be in a position to replace sharing in Theorem \ref{Grahl and Nevo} by partial sharing. But it shall entail some extra restrictions on the wandering shared functions; that they must be bounded holomorphic functions on $\mathbb{D}.$
  
 The extension of results in theory of meromorphic functions to that of holomorphic functions into $\Pro^N(\C)$ has been a regular feature in the field of normal families. The results obtained by various authors  are more general and can be reduced to the case of meromorphic functions by just taking $N=1$. In our approach to the last result of this paper we have followed a  different path by considering a family of complex valued holomorphic functions and lifting them to  functions into $\Pro^2(\C)$. A choice of proper hyperplanes has enabled us to formulate a nice criteria for normality of complex valued holomorphic functions on $D$. Now we introduce the necessary preliminaries.

\subsection{Projective spaces and hyperplanes}
An $N-$ dimensional complex projective space is $\Pro^N(\C) = \left(\C^{N + 1} \setminus \{0\}\right) / \sim $, where  $(a_0,...,a_N) \sim (b_0,...,b_N)$ if and only if $(a_0,...,a_N) = \lambda (b_0,...,b_N) $ for some $\lambda \in \C$. We denote by $[a_0: \cdots : a_N]$ the equivalence class of $(a_0,...,a_N)$. The mapping $\pi: \C^{N + 1} \setminus \{0\} \rightarrow \Pro^N(\C)$ given by $\pi(a_0,...,a_N) = [a_0: \cdots : a_N]$ is called the standard projection mapping.
For details see \cite[pp.99-102]{ru2001nevanlinna} and \cite{fujimoto1993value}.

A subset $H$ of $\Pro^N(\C)$ is called a hyperplane if there is an $N-$ dimensional linear subspace $\Tilde{H}$ of $\C^{N+1}$ such that $\pi(\Tilde{H} \setminus \{0\})= H$. For a fixed system of homogeneous coordinates $z = [z_0: z_1:\cdots :z_N]$, a hyperplane $H$ of $\PnC$ can be written as 
$$H = \{z \in \PnC | <z,\alpha> = 0\}$$
where $<z,\alpha> := a_0z_0 + \cdots + a_Nz_N$
and $\alpha = (a_0,\hdots,a_N) \in \C^{N+1}$ is a non-zero vector. 
We write it as
\begin{equation}\label{hyperplane eqn}
  H = \{<z,\alpha> = 0\}  
\end{equation}
for convenience. We also define 
$$||H|| := \max_{0 \leq i \leq N}|a_i|.$$
Henceforth in this paper, we only consider normalized hyperplane representations so that $||H|| = 1.$

If  $a_j(z), j = 0,1, \hdots ,N,$ are  holomorphic functions on a domain $D$ not all simultaneously zero at any point $z \in D$, then at each point $z \in D$ there is a hyperplane given by
$$H= \{z \in \PnC |  a_0(z)z_0 + \cdots + a_N(z)z_N  = 0\}$$ 
We call it a moving hyperplane,
and write it as
\begin{equation}\label{moving hyperplane eqn}
  H(z) = \{<z,\alpha(z)> = 0\}  
\end{equation}
Let $H_1,H_2, \hdots, H_{N+1}$ be hyperplanes in $\PnC$ represented as $H_j = \{<z,\alpha_j> = 0\}$ for $j = 1,\hdots,N+1.$
Then $$\D(H_1, \hdots, H_{N+1}) := |\det(\alpha_1^t,\hdots,\alpha_{N+1}^t)|$$
depends only on $H_j$ and not on any choice of $\alpha_j$ for normalized representations of hyperplanes.
\begin{definition}
    Let $H_1, \hdots, H_q$ with $q \geq N+1$, be hyperplanes in $\PnC$. Define 
    $$\D(H_1,\hdots, H_q) := \prod_{1\leq j_1 <\cdots<j_{N+1}\leq q} |\det (\alpha_{j_1}^t, \hdots, \alpha_{j_{N+1}}^t)|.$$
    We say the hyperplane family $H_1, \hdots, H_q, $ $q \geq {N+1}$, in  $\PnC$ is in general position if $\D(H_1,\hdots,H_q) > 0.$
\end{definition}
\subsection{Reduced representation}
Let $D$ be a domain in $\C$, $f: D \rightarrow \PnC$ be a holomorphic map and $U$ be an open set in $D$. Any holomorphic map $\Tilde{f}:U \rightarrow \C^{N+1} $ such that $\pi(\Tilde{f}(z)) \equiv f(z)$ in $U$ is called a \textit{representation} of $f$ on $U$, where $\pi$ is the standard projection map.
\begin{definition}
    For an open set $U$ of $D$ we call $\Tilde{f} = (f_0,\hdots,f_N)$ a \textit{reduced representation} of $f$ on $U$ if $\Tilde{f}$ is a representation of $f$ and  
 $f_0,\hdots,f_N$ are holomorphic functions on $U$ without common zeroes. 

\end{definition}
For any such representation we put $||\Tilde{f}(z)|| := \left(\ \sum_{i=0}^N |f_i(z)|^2 \right)^{\frac{1}{2}}$
\begin{remark}
    Every holomorphic map of $D$ into $\PnC$ has a reduced representation on $D$ \cite[section 5]{fujimoto1974meromorphic}.
\end{remark}

\begin{remark} \label{reduced representations remark}
    Let $\Tilde{f} = (f_0,\hdots,f_N)  $ be a reduced representation of $f$ on a domain D. Then for any arbitrary nowhere zero holomorphic function $h$ on $D$, $(hf_0, \hdots, hf_N)$  is also a reduced representation of $f$. Conversely, for any other reduced representation $(g_0,\hdots,g_N)$ of $f$, each $g_i$ can be written as $g_i = hf_i$ for some nowhere zero holomorphic function $h$ on $D$.
\end{remark}

\begin{definition}
    A function $f:D \rightarrow \PnC $ with reduced representation $\Tilde{f} = (f_0,\hdots,f_N)$ is said to intersect a hyperplane $H$ given by \eqref{hyperplane eqn} at a point $z_0 \in D$ if 
    $$<\Tilde{f},H> \  := a_0f_0 + \cdots + a_Nf_N$$ 
    has a zero at $z_0$ and we then  write $f(z_0) \in H.$ 

    If $<\Tilde{f},H> \equiv 0$ on $D$ then we say that $f(D) \subset H$ and if $<\Tilde{f}(z),H> \neq 0 
 \ \forall z \in D$ then $f$ is said to omit $H$ on $D.$
\end{definition}

\subsection{Normal family}
\begin{definition}
    A family $\scF$ of holomorphic maps from a domain $D$ into $\PnC$ is said to be normal in $D$ if any sequence in $\scF$ contains a subsequence which converges uniformly on compact subsets of $D$  to a holomorphic map of $D$ into $\PnC$; and $\scF$ is said to be \textit{normal at a point $a$} in $D$ if $\scF$ is normal on some neighbourhood of $a$ in $D$.
    \end{definition}

    Using the Fubini-Study metric on $\PnC$, we see that a sequence $\{f_n\}_{n=1}^{\infty}$ of holomorphic maps of $D$ into $\PnC$ converges uniformly on compact subsets of $D$ to a holomorphic map $f$ if and only if, for any $a \in D$ each $f_n$ has a reduced representation  $$\Tilde{f_n} = (f_{n0}, \hdots, f_{nN})$$ on some fixed neighbourhood $U$ of $a$ in $D$   such that 
    $\{f_{ni}\}_{n=1}^{\infty}$ converges uniformly on  $U$ to a holomorphic function 
    $f_i$ (for $i= 0,1,\hdots,N$) on $U$ with the property that 
    $$\Tilde{f} = (f_0, f_1, \hdots, f_N)$$ is a reduced representation of $f$ on $U$.
    \subsection{Extension of derivative}
    Now we proceed to the extension of the concept of derivative of meromorphic functions to that of functions from $D$ into $\PnC$. Note that any sort of spherical derivative using Fubini Study metric would have been inadequate to handle the problem of sharing where we require the derivative again to be a function into $\PnC$. 

    In a successful attempt at extension of Theorem \ref{Schwick theorem} a suitable notion of derivative for this setting was provided by Ye et al \cite{ye2015extension} as  follows:
    
    Let $f= [f_0:\cdots:f_N] $ be a holomorphic map of $D$ into $\PnC$, $\mu \in \{0,1,\hdots,N\} $ with $f_{\mu} \not\equiv 0$, and $d(z)$ be a holomorphic function in $D$ such that $f_{\mu}^2/d$ and $W(f_{\mu}, f_i)/d$ , $(i=0,1,\hdots, N; i \neq \mu)$ are holomorphic functions without common zeroes. Here, as usual, 
    $$W(f_{\mu}, f_i) = \begin{vmatrix}
        f_{\mu} & f_{i}\\
        f'_{\mu}&  f'_{i}
    \end{vmatrix} $$
    denotes the Wronskian of $f_{\mu}$ and $f_{i}$.

    \begin{definition}\cite{ye2015extension}
        The holomorphic map induced by the map $$(W(f_{\mu},f_0), \hdots, W(f_{\mu},f_{\mu - 1}), f_{\mu}^2, W(f_{\mu},f_{\mu + 1}), \hdots , W(f_{\mu},f_{N})): D \rightarrow \C^{N+1}$$ is called the $\mu$th \textit{derived holomorphic map } of $f$ and we write 
        $$ \nabla_{\mu}f = [W(f_{\mu},f_0)/d, \hdots, W(f_{\mu},f_{\mu - 1})/d, f_{\mu}^2/d, W(f_{\mu},f_{\mu + 1})/d, \hdots , W(f_{\mu},f_{N})/d] . $$
        For simplicity, we shall write $\nabla_{0}f$ as $\nabla f$.
    \end{definition}
  \begin{remark}
      The definition of $\nabla_{\mu}f$ does not depend on the choice of a reduced representation of $f$.
  \end{remark}  
\begin{remark}
    A meromorphic function $f= \displaystyle\frac{f_1(z)}{f_0(z)}$ on $D$  can be regarded as a holomorphic map from $D$ to $\Pro^1(\C)$ given by $f(z) = [f_0(z):f_1(z)]$ 
    and $\nabla f$ is exactly the ordinary derivative of $f(z)$.
\end{remark}
In 2015 Ye et al obtained the following result which led them to an extension of the Schwick's theorem.
\begin{theorema}[\cite{ye2015extension}]\label{YPY extension of Schwick's theorem}
     Let $\scF$ be a family of holomorphic maps of a domain $D \subseteq \C$ into $\PnC$, $H_1, \hdots, H_{2N+1}$ be hyperplanes in $\PnC$ in general position, and $\delta$ be a real number with $0 < \delta < 1$. Suppose that for each $f \in \scF$ the following conditions are satisfied: 
     \begin{enumerate}[(i)]
         \item If $\nabla f(z)  \in H_j$, then $f(z) \in H_j$, for $j=1,\hdots, 2N + 1$.
         \item If $f(z) \in \bigcup_{j=1}^{2N+1}H_j$, then 
         $$\frac{|<f(z),H_0>|}{||f(z)||. ||H_0||} \geq \delta $$
         where $H_0 = \{x_0 = 0\}$ is a coordinate hyperplane. 
         \item If $f(z) \in \bigcup_{j=1}^{2N+1}H_j$, then 
         $$\frac{|<\nabla f(z), H_k >|}{|f_0(z)|^2} \leq \frac{1}{\delta}$$
         for $k= 1, \hdots,2N+1$.
     \end{enumerate}
     Then $\scF$ is normal in $D$.
 \end{theorema}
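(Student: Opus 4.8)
\medskip
\noindent\textbf{A proof proposal.} Since normality is a local property, it suffices to show that $\scF$ is normal at every point of $D$. Fix $z_0\in D$ and suppose, for contradiction, that $\scF$ is not normal at $z_0$. The plan is to build, by rescaling, a \emph{nonconstant} holomorphic map $g\colon\C\to\PnC$ that omits all of $H_1,\dots,H_{2N+1}$; since these hyperplanes are in general position, this is impossible by the classical extension of Picard's theorem to entire curves --- a holomorphic map from $\C$ into $\PnC$ that omits $2N+1$ hyperplanes in general position is constant (see \cite{fujimoto1993value, ru2001nevanlinna}; this is the entire-curve counterpart of the normality statement in Theorem~\ref{dufresnoy theorem}) --- and Theorem~\ref{YPY extension of Schwick's theorem} follows.

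First I would invoke a Zalcman--Pang type rescaling lemma for holomorphic maps into $\PnC$ (of the kind developed in \cite{ye2015extension}): there are $f_n\in\scF$, points $z_n\to z_0$ and radii $\rho_n\to 0^+$ such that, after passing to suitable reduced representations, $g_n(\zeta):=f_n(z_n+\rho_n\zeta)$ converges locally uniformly on $\C$ to a nonconstant holomorphic map $g\colon\C\to\PnC$ with reduced representation $\widetilde g=(g_0,\dots,g_N)$. The next step is to exploit condition~(ii) to secure the non-degeneracy $g_0\not\equiv 0$, i.e.\ $g(\C)\not\subset H_0$: the ratio $|\langle f,H_0\rangle|/(\|f\|\,\|H_0\|)$ is invariant under the rescaling, so (ii) persists for the $g_n$ and keeps the $H_0$-component of the limit from vanishing identically. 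This non-degeneracy is exactly what makes the representation-independent quantities entering the argument --- the meromorphic functions $\langle\widetilde g_n,H_k\rangle/g_{n0}$ and the quotients $\langle\nabla g_n,H_k\rangle/g_{n0}^{2}$ --- converge to finite limits, and in particular it yields $\nabla g_n\to\nabla g$ as holomorphic maps into $\PnC$.

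The heart of the matter is to prove that $g$ omits each $H_j$. Suppose instead $g(\zeta_0)\in H_j$ for some $j$ while $g(\C)\not\subset H_j$. Applying Hurwitz's theorem to $\langle\widetilde g_n,H_j\rangle\to\langle\widetilde g,H_j\rangle\not\equiv 0$ produces $\zeta_n\to\zeta_0$ with $g_n(\zeta_n)\in H_j$, hence $f_n(w_n)\in H_j$ where $w_n:=z_n+\rho_n\zeta_n\to z_0$. Writing $H_j=\{\langle z,\alpha_j\rangle=0\}$ with $\alpha_j=(a_{j,0},\dots,a_{j,N})$, one has the identity (with $d$ as in the definition of $\nabla f$)
\[
\langle\nabla f,H_j\rangle\;=\;\frac1d\Bigl(W\bigl(f_0,\langle\widetilde f,H_j\rangle\bigr)+a_{j,0}f_0^{2}\Bigr),
\]
which ties the vanishing of $\langle\widetilde f_n,H_j\rangle$ at $w_n$ to the behaviour of $\langle\nabla f_n,H_j\rangle$ there. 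Condition~(i) --- every intersection of $\nabla f$ with $H_j$ is an intersection of $f$ with $H_j$ --- lets one transfer zeros in the needed direction, while the uniform bound $|\langle\nabla f_n,H_j\rangle|/|f_{n0}|^{2}\le 1/\delta$ from (iii) keeps the rescaled expressions from blowing up. Comparing, along the rescaling, the order of vanishing of the two sides of the identity (whose components scale with different powers of $\rho_n$) forces $\langle\widetilde g,H_j\rangle$ to have no zero at $\zeta_0$ after all; running this for $j=1,\dots,2N+1$ shows $g$ omits all $2N+1$ hyperplanes, contradicting the first paragraph.

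I expect the third step --- and within it, the coupling of conditions (i), (ii), (iii) --- to be the main obstacle, and the root cause is that the derived map $\nabla$ does \emph{not} rescale homogeneously: under $z\mapsto z_n+\rho_n\zeta$ each Wronskian component $W(f_0,f_i)$ acquires a factor $\rho_n$ while the distinguished component $f_0^{2}$ does not. Hence one cannot simply say ``$\nabla g_n\to\nabla g$, pass (i)--(iii) to the limit, and conclude''; one must first justify the convergence $\nabla g_n\to\nabla g$ (this is where the non-degeneracy $g_0\not\equiv 0$ from (ii) is used) and then run a careful Hurwitz / minimum-modulus analysis that correctly weighs the zero-transfer supplied by (i) against the uniform upper estimate supplied by (iii), keeping track of the mismatched powers of $\rho_n$. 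Throughout, condition~(ii) plays an indispensable auxiliary role: it pins down the coordinate hyperplane $H_0$ so that every normalized quotient occurring in the proof remains finite and convergent in the rescaling limit.
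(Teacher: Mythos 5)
Your overall skeleton (Zalcman--Aladro--Krantz rescaling, Hurwitz's theorem, a Picard-type theorem for entire curves) matches the proof the paper gives for its generalization, Theorem \ref{Generalization YPY}, of which the present statement is the constant-hyperplane special case; but the core step you propose --- showing that the rescaled limit $g$ \emph{omits} every $H_j$ and then concluding by the Picard-type theorem --- is not how the argument can be made to work, and the place where you defer the work (``comparing orders of vanishing \dots forces $\langle\widetilde g,H_j\rangle$ to have no zero at $\zeta_0$'') is a genuine gap. If $g(\zeta_0)\in H_j$, the scaling mismatch you correctly identify (each Wronskian component picks up a factor $\rho_n$ while $f_0^2$ does not), combined with the bound (iii), does \emph{not} contradict the existence of that zero: in the limit it only yields the vanishing at $\zeta_0$ of the auxiliary functions $\phi_k=\sum_{i}\alpha_{k,i}W(g_0,g_i)/g_0^2=\bigl(\sum_i\alpha_{k,i}g_i/g_0\bigr)'$, and for $k=j$ this merely says that $\langle\widetilde g,H_j\rangle/g_0$ has a critical point at $\zeta_0$ --- no contradiction from a single hyperplane. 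The contradiction in the paper's proof (following Ye--Pang--Yang) comes from aggregating hyperplanes: Lemma \ref{picard extension} first gives a hyperplane, say $H_1$, with $\langle\widetilde g,H_1\rangle\not\equiv0$ but vanishing at some $\xi_0$; Hurwitz plus condition (ii) at the resulting points gives $g_0(\xi_0)\neq0$; multiplying the bound (iii) by $\rho_n$ gives $\phi_k(\xi_0)=0$ for all $k\geq 2$; one then shows that at most $N$ of the $\phi_k$ can vanish identically (otherwise general position would determine $\widetilde g$ from $g_0$, making $g$ constant), and for $N$ indices with $\phi_k\not\equiv0$ Hurwitz produces zeros of $\langle\nabla f_n,H_k\rangle$, which condition (i) converts into zeros of $\langle\widetilde f_n,H_k\rangle$; passing to the limit, $g(\xi_0)$ lies in $N+1$ hyperplanes in general position, which is impossible. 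Your proposal contains none of this aggregation mechanism, and without it the argument does not close.

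Two smaller points. The Picard-type input you quote (omitting $2N+1$ hyperplanes) is weaker than what even your own case analysis needs, since you allow $g(\C)\subset H_j$; the correct statement is Lemma \ref{picard extension} (``contained in or omits'' each hyperplane), and in the actual proof it is used at the start, to produce a hyperplane that $g$ neither omits nor lies in, not at the end. Also, condition (ii) is hypothesized only at points where $f(z)\in\bigcup_j H_j$, so it does not ``persist under rescaling'' to give the global non-degeneracy $g(\C)\not\subset H_0$; it is applied only at the Hurwitz points to get the local nonvanishing $g_0(\xi_0)\neq 0$, and this is also what replaces your unjustified assertion that $\nabla g_n\to\nabla g$: the paper never needs such convergence, working instead with the rescaled Wronskian identity and the functions $\phi_k$ directly.
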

 \subsection{Sharing of functions and hyperplanes}

 We now give the definition of sharing hyperplanes, which extends the definition of sharing values.
\begin{definition}
    Suppose $f$ and $g$ are two holomorphic functions from  domain $D$ into $\PnC$ and $H$ be a hyperplane in $\PnC$. If there exists some (and thus all) reduced representations $\Tilde{f}$ and $\Tilde{g}$ of $f$ and $g$ respectively such that $<\Tilde{f},H>$ and $<\Tilde{g},H>$ share(or partially share) zero on $D$, we say that $f$ and $g$ share(respectively partially share) $H$ on $D$.
\end{definition}
By remark \ref{reduced representations remark}, $<\Tilde{f}(z),H> = 0$ is independent of the choice of reduced representation of $f$. Therefore sharing(or partial sharing) of hyperplanes is well defined.

We can similarly define sharing(or partial sharing) of a moving hyperplane $H(z)$ by the functions $f$ and $g$ if  $<\Tilde{f},H(z)>$ and $<\Tilde{g},H(z)>$ share(respectively partially share) zero on $D$.
\section{Main results}

 In this paper we generalize the theorem \ref{YPY extension of Schwick's theorem} to the case where we can take moving  hyperplanes that can vary for different functions of the family  $\scF$. 
 \begin{theorem}\label{Generalization YPY}
     Let $\scF$ be a family of holomorphic maps of a domain $D \subseteq \C$ into $\PnC$  and $\delta$ be a real number with $0 < \delta < 1$. Suppose that for each $f \in \scF$ and $z \in D$ there exist $2N + 1 $ hyperplanes $H_1^f(z), \hdots, H_{2N+1}^f(z)$  in $\PnC$ in general position with 
     $$ \D(H_1^f(z), \hdots, H_{2N+1}^f(z))   > \delta , \ \forall f \in \scF, z \in D $$ such  that the following conditions are satisfied: 
     \begin{enumerate}[(i)]

         \item If $\nabla f(z)  \in H_j^f(z)$, then $f(z) \in H_j^f(z)$, for $j=1,\hdots, 2N + 1$.
         \item If $f(z) \in \bigcup_{j=1}^{2N+1}H_j^f(z)$, then 
         $$\frac{|<f(z),H_0>|}{||f(z)||. ||H_0||} \geq \delta $$
         where $H_0 = \{x_0 = 0\}$ is a coordinate hyperplane. 
         \item If $f(z) \in \bigcup_{j=1}^{2N+1}H_j^f(z)$, then 
         $$\frac{|<\nabla f(z), H_k^f(z) >|}{|f_0(z)|^2} \leq \frac{1}{\delta}$$
         for $k= 1, \hdots,2N+1$.
     \end{enumerate}
     Then $\scF$ is normal in $D$.
 \end{theorem}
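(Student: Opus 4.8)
\medskip
\noindent\textbf{Proof strategy.} Since normality is a local property, it suffices to prove $\scF$ is normal at each $z_0\in D$; suppose it is not. The plan is to argue by contradiction with a Zalcman--Pang rescaling adapted to holomorphic curves, carried out simultaneously with a compactness argument for the wandering hyperplanes. By the Zalcman-type rescaling lemma for families of holomorphic maps into $\PnC$ (the one underlying the proof of Theorem \ref{YPY extension of Schwick's theorem}, phrased through the Fubini--Study derivative), there are $f_n\in\scF$, points $z_n\to z_0$ and radii $\rho_n\downarrow 0$ such that the maps $g_n(\zeta):=f_n(z_n+\rho_n\zeta)$, with reduced representations $\widetilde{g_n}=\widetilde{f_n}(z_n+\rho_n\,\cdot\,)$, converge locally uniformly on $\C$ to a \emph{non-constant} holomorphic map $g\colon\C\to\PnC$ with reduced representation $\widetilde g=(g_0,\dots,g_N)$. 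Passing to a further subsequence, I would also make the wandering hyperplanes converge: the normalized coefficient vectors $\alpha_j^{f_n}$ of $H_j^{f_n}$ form a locally bounded --- hence normal --- family near $z_0$, so $\alpha_j^{f_n}\to\alpha_j^{\ast}$ locally uniformly, and since $\rho_n\to 0$ the rescaled coefficient functions $\zeta\mapsto\alpha_j^{f_n}(z_n+\rho_n\zeta)$ have derivatives of order $O(\rho_n)$ and thus converge locally uniformly in $\zeta$ to the \emph{constant} vectors $\alpha_j^{\ast}(z_0)$. Writing $H_j:=\{\langle\,\cdot\,,\alpha_j^{\ast}(z_0)\rangle=0\}$, the uniform bound $\D(H_1^{f_n}(z),\dots,H_{2N+1}^{f_n}(z))>\delta$ passes to the limit as $\D(H_1,\dots,H_{2N+1})\ge\delta>0$, so $H_1,\dots,H_{2N+1}$ are fixed hyperplanes in general position.

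Next I would transfer the hypotheses to the limit. Put $\varphi_j:=\langle\widetilde g,H_j\rangle$ and $\psi_{n,j}:=\langle\widetilde{g_n},H_j^{f_n}(z_n+\rho_n\,\cdot\,)\rangle$, so that $\psi_{n,j}\to\varphi_j$ locally uniformly. The key computation is that, with $w=z_n+\rho_n\zeta$, one has $W(f_{n,0},f_{n,i})(w)=\rho_n^{-1}W(g_{n,0},g_{n,i})(\zeta)$; hence, after multiplying by $\rho_n$, the relation ``$\nabla f_n(w)\in H_j^{f_n}(w)$'' in (i) and the quantity $\langle\nabla f_n(w),H_k^{f_n}(w)\rangle$ in (iii) become statements about the Wronskian $W(g_{n,0},\psi_{n,j})(\zeta)$ up to error terms of size $O(\rho_n)$ --- namely the ``coordinate-correction'' term coming from the $x_0$-coefficient of $H_j^{f_n}$ (which now carries a factor $\rho_n$) and the terms produced by the $\zeta$-variation of the hyperplane coefficients (itself $O(\rho_n)$). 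Letting $n\to\infty$ --- Hurwitz's theorem applied to $\psi_{n,j}$ and to $W(g_{n,0},\psi_{n,j})-\varepsilon_{n,j}$ with $\varepsilon_{n,j}\to 0$, and using (ii) to force $g_0\not\equiv0$ and to keep every zero of $\varphi_j$ off the zero set of $g_0$ --- should yield, for $g$ and the fixed $H_j$: (A) (from (i)) every zero of $W(g_0,\varphi_j)$ is a zero of $\varphi_j$, $j=1,\dots,2N+1$; and (B) (from (iii)) at every zero of \emph{any} $\varphi_k$, all the Wronskians $W(g_0,\varphi_1),\dots,W(g_0,\varphi_{2N+1})$ vanish.

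The contradiction then drops out. The contrapositive of (A) says $W(g_0,\varphi_j)$ vanishes only on $Z(\varphi_j)$, while (B) says it vanishes on $\bigcup_k Z(\varphi_k)$; hence $\bigcup_k Z(\varphi_k)\subseteq Z(\varphi_j)$ for each $j$, so the zero sets $Z(\varphi_1),\dots,Z(\varphi_{2N+1})$ all coincide. A common zero $\zeta_0$ would put $g(\zeta_0)$ in $\bigcap_{j}H_j$, which is empty since any $N+1$ --- and hence all $2N+1$ --- hyperplanes in general position have empty common intersection; therefore this common zero set is empty, i.e.\ $g$ omits the $2N+1$ hyperplanes $H_1,\dots,H_{2N+1}$, which are in general position. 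By the classical fact that a non-constant holomorphic map $\C\to\PnC$ cannot omit $2N+1$ hyperplanes in general position (it follows, e.g., from Theorem \ref{dufresnoy theorem} applied to the translates $\{g(\,\cdot\,+c):c\in\C\}$ together with a Liouville-type estimate, or from the Second Main Theorem), $g$ must be constant --- contradicting the rescaling lemma. Hence $\scF$ is normal at $z_0$, and so in $D$. The degenerate sub-cases --- where some $\varphi_j\equiv0$ (so $g(\C)\subseteq H_j$) or some $W(g_0,\varphi_j)\equiv0$ --- are handled separately by restricting $g$ to the relevant projective subspace and inducting on $N$, the case $N=0$ being trivial and (ii) again excluding the first possibility.

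The endgame of the third paragraph is routine. The main obstacle is the first half: setting up the simultaneous rescaling so that the \emph{wandering} hyperplanes converge to a fixed configuration that is still in general position --- which is exactly what the uniform bound $\D(\cdots)>\delta$ is for --- and then tracking how the factor $\rho_n$ in the rescaling of $\nabla$ makes the coordinate-correction and hyperplane-variation terms disappear in the limit, leaving the clean conditions (A) and (B). Ensuring that the rescaled reduced representations converge to a genuine reduced representation of $g$ and that $\nabla g_n$ does not degenerate will also require care, and this is precisely where conditions (ii) and (iii) are used.
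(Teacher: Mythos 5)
Your overall machinery is the same as the paper's: Aladro--Krantz rescaling to a nonconstant limit curve $g$ with reduced representation $(g_0,\dots,g_N)$, extraction of limit hyperplanes from the normalized coefficient vectors using the uniform bound $\D(\cdots)>\delta$ to keep the limit configuration in general position, the scaling identity $W(g_{n,0},g_{n,i})(\xi)=\rho_n W(f_{n,0},f_{n,i})(z_n+\rho_n\xi)$ which makes the $x_0$-coefficient and the variation of the hyperplane coefficients disappear after multiplying by $\rho_n$, Hurwitz to transfer conditions (i) and (iii) back and forth, and condition (ii) as the guard giving $|g_0|\ge\delta\|\tilde g\|>0$ at the relevant points so that $(f_{n,0}^2,W(f_{n,0},f_{n,1}),\dots)$ is a genuine reduced representation of $\nabla f_n$ there. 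The only structural difference is the endgame: you try to show $g$ omits (or lies in) every limit hyperplane and invoke the Picard-type statement (Lemma \ref{picard extension}) at the end, whereas the paper uses that lemma at the start to pick one hyperplane $H_1$ with $\varphi_1:=\langle\tilde g,H_1(z_0)\rangle\not\equiv0$ having a zero $\xi_0$, and then derives a contradiction locally by forcing $N+1$ hyperplanes in general position through the single point $g(\xi_0)$.

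There is, however, a genuine gap exactly at the point where the paper's proof does its real work: the degenerate indices $j$ with $W(g_0,\varphi_j)\equiv0$. Your implication (A) is obtained by Hurwitz and condition (i), and Hurwitz requires $W(g_0,\varphi_j)\not\equiv0$; when it vanishes identically one has $\varphi_j\equiv c_jg_0$, and if $c_j\neq0$ then (A) is simply false for that $j$, so at your common zero $\zeta_0$ you cannot conclude $\varphi_j(\zeta_0)=0$, and $g(\zeta_0)$ need not lie in $\bigcap_j H_j$. Your proposed treatment of the degenerate cases does not repair this: the relation $\varphi_j\equiv c_jg_0$ with $c_j\neq0$ does not confine $g(\C)$ to any projective subspace, so ``restrict and induct on $N$'' has nothing to act on; and your claim that (ii) excludes $\varphi_j\equiv0$ is also unfounded, because when the limit function vanishes identically Hurwitz produces no points with $f_n(z_n+\rho_n\xi)\in H_j^{f_n}$ at which (ii) could be applied (in any case containment is harmless, since Lemma \ref{picard extension} allows it). What is missing is precisely the paper's central counting claim: at most $N$ of the functions $\phi_k=W(g_0,\varphi_k)/g_0^2$ can vanish identically, because $N+1$ relations $\varphi_k/g_0\equiv c_k$, with the corresponding hyperplanes in general position, determine $g_1/g_0,\dots,g_N/g_0$ uniquely as constants and force $g$ to be constant. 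With that lemma your argument closes (either at least $N+1$ non-degenerate $\varphi_k$ vanish at $\zeta_0$, contradicting general position, or $g$ is constant); without it the endgame is not secured. A smaller caveat: the hypotheses do not make $z\mapsto\alpha_j^f(z)$ holomorphic, so ``locally bounded hence normal'' is not the right justification for extracting limits of the coefficients (the paper is equally casual here; only compactness of the normalized coefficient sphere along the relevant points is actually available).
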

In particular when $N=1$ each of the moving hyperplane $H_j^f(z), j=1,2 \ \text{and} \ 3$ is a function from the domain $D$ into $\Pro^1(\C)$ and we obtain the following result. 
\begin{theorem}\label{Similar to Grahl result 1}
    Let $\scF$ be a family of meromorphic functions on a domain $D \subseteq \C$ and $ \epsilon, M $ be two positive real numbers. Suppose for each $f \in \scF$ there exist three holomorphic functions $a_1^f, a_2^f \ \text{and} \ a_3^f$ defined on $D$ such that the following conditions are satisfied: 
\begin{enumerate}[(i)]
\item $ |a_j^f(z)| \leq M \ \text{and} \ |a_j^f(z) - a_k^f(z)| \geq \epsilon \ \forall z \in D \ \text{and} \ j,k \in \{1,2,3\} \ \text{with} \ j \neq k$
    \item For any $z \in D$ if $f'(z) = a_j^f(z)$ then $f(z) = a_j^f(z),$ for $j \in \{1,2,3\}$.
    \item For any $z \in D$ if $f(z) = a_j^f(z)$ then $|f'(z)| \leq M$ for $j \in \{1,2,3\}$.
\end{enumerate}
Then $\scF$ is normal in $D$.
\end{theorem}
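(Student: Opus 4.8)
\emph{Proof plan.} The aim is to recover Theorem \ref{Similar to Grahl result 1} as the case $N=1$ of Theorem \ref{Generalization YPY}, after translating the data of meromorphic functions and wandering functions into that of holomorphic curves in $\Pro^1(\C)$ and wandering hyperplanes. First I would fix the dictionary. A meromorphic function $f$ on $D$ is regarded as the holomorphic map $[f_0:f_1]\colon D\to\Pro^1(\C)$ with $f=f_1/f_0$ for a reduced representation $(f_0,f_1)$; at any point where $f$ is holomorphic one may use the local reduced representation $(1,f)$, for which $\nabla f=[1:f']$, i.e.\ $\nabla f$ is just the ordinary derivative. Each wandering function $a_j^f$ is converted into the wandering hyperplane $H_j^f(z)=\{<z,\alpha_j^f(z)>=0\}$ of $\Pro^1(\C)$ with $\alpha_j^f(z)=(a_j^f(z),-1)$ (whose normalization is obtained on dividing by $\max\{|a_j^f(z)|,1\}$), so that at a point where $f$ is holomorphic one has $f(z)\in H_j^f(z)\iff f(z)=a_j^f(z)$ and $\nabla f(z)\in H_j^f(z)\iff f'(z)=a_j^f(z)$.

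Next I would check the separation hypothesis of Theorem \ref{Generalization YPY}. A direct evaluation of the $2\times 2$ determinants gives
$$\D(H_1^f(z),H_2^f(z),H_3^f(z))=\prod_{1\leq j<k\leq 3}\frac{|a_j^f(z)-a_k^f(z)|}{\max\{|a_j^f(z)|,1\}\,\max\{|a_k^f(z)|,1\}}\geq\left(\frac{\epsilon}{(1+M)^2}\right)^{3}>0,$$
using $|a_j^f(z)-a_k^f(z)|\geq\epsilon$ and $|a_j^f(z)|\leq M$; in particular the three hyperplanes are in general position uniformly in $f$ and $z$.

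Then I would translate conditions (i)--(iii) of Theorem \ref{Generalization YPY}. For (i): at a point where $f$ is holomorphic, $\nabla f(z)\in H_j^f(z)$ means $f'(z)=a_j^f(z)$, whence hypothesis (ii) of Theorem \ref{Similar to Grahl result 1} yields $f(z)=a_j^f(z)$, i.e.\ $f(z)\in H_j^f(z)$; at a pole of $f$ of order $m$ one computes from the definition of $\nabla$ that $\nabla f(z)=[0:1]$ there (because $W(f_0,f_1)$ vanishes to order exactly $m-1$ while $f_0^2$ vanishes to order $2m$, so after dividing both by their common factor the first coordinate still vanishes while the second does not), and $<(0,1),\alpha_j^f(z)>=-1\neq 0$, so the hypothesis of (i) is vacuous at poles. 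Conditions (ii) and (iii) need only be verified when $f(z)\in\bigcup_j H_j^f(z)$, i.e.\ when $f(z)=a_j^f(z)$ is finite, hence where $f$ is holomorphic and the representation $(1,f)$ is available: there $\frac{|<f(z),H_0>|}{||f(z)||\cdot||H_0||}=\frac{1}{\sqrt{1+|f(z)|^2}}\geq\frac{1}{\sqrt{1+M^2}}$ since $|f(z)|=|a_j^f(z)|\leq M$, and $\frac{|<\nabla f(z),H_k^f(z)>|}{|f_0(z)|^2}\leq|a_k^f(z)-f'(z)|\leq 2M$ since $|f'(z)|\leq M$ by hypothesis (iii) of Theorem \ref{Similar to Grahl result 1}. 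Choosing $\delta:=\tfrac{1}{2}\min\left\{\big(\tfrac{\epsilon}{(1+M)^2}\big)^{3},\ \tfrac{1}{\sqrt{1+M^2}},\ \tfrac{1}{2M}\right\}\in(0,1)$, every hypothesis of Theorem \ref{Generalization YPY} is met, so the family $\{[f_0:f_1]:f\in\scF\}$ of holomorphic maps into $\Pro^1(\C)$ is normal; this is precisely the normality of $\scF$ as a family of meromorphic functions in the spherical metric.

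The one point that requires care is the bookkeeping of reduced representations in condition (iii): the quantity $|<\nabla f(z),H_k^f(z)>|/|f_0(z)|^2$ is not invariant under rescaling the reduced representation of $f$, so one must consistently evaluate it with the representation $(1,f)$ near the (finite) points at which the conditions are actually tested, and dispose separately of the poles of $f$. Since normality is a local property and the poles of $f$ fall outside all the relevant hypotheses, this causes no genuine difficulty; the rest of the argument is a direct substitution into Theorem \ref{Generalization YPY}.
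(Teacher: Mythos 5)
Your proof is correct and takes essentially the same route as the paper: identify each $f$ with a curve in $\Pro^1(\C)$, attach to the wandering functions the moving hyperplanes with coefficients $(a_j^f(z),-1)$, bound $\D(H_1^f,H_2^f,H_3^f)$ below using the separation $\epsilon$, verify conditions (i)--(iii) of Theorem \ref{Generalization YPY} (with the quantity in (iii) reducing to $|a_k^f(z)-f'(z)|$), and choose $\delta$ accordingly. The only cosmetic differences are that the paper first rescales so that $M=1$, which makes $(a_j^f,-1)$ already normalized, whereas you keep general $M$ and, more carefully than the paper, dispose explicitly of the poles of $f$ and of the choice of reduced representation.
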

The following examples show that each of the condition in theorem \ref{Similar to Grahl result 1} is essential.We denote the unit disk $\{z \in \C: |z| < 1\}$ by $\mathbb{D}$.
\begin{example}[Condition (i) is essential] 
    Consider the family  $\{f_n(z) = e^{nz}: n \in \mathbb{N}\}$ which is not normal in $\mathbb{D}.$ Take $a_j^{f_n}(z) = (j+1)ne^{nz}$ for $j \in \{1,2,3\}$. Note that $f_n'(z) = ne^{nz} \neq a_j^{f_n}(z)  $ for any $z \in \mathbb{D}$ and $j \in \{1,2,3\}.$ Also $f_n(z) \neq a_j^{f_n}(z)$ for any  $j \in \{1,2,3\}.$ 
\end{example}
\begin{example}[Condition (ii) is essential] 
    Consider the family
    $$\left\{ f_n(z) = \displaystyle\frac{1}{\sin nz} \left(\frac{1}{2} + \frac{z}{8n}\right) : n \in \mathbb{N}\right\}$$ of meromorphic functions  in  $\mathbb{D}$ which is not normal. Take $$a_1^{f_n}(z) = \displaystyle\frac{1}{2} + \displaystyle\frac{z}{8n}, a_2^{f_n}(z) = - \displaystyle\frac{1}{2} - \displaystyle\frac{z}{8n} \ \text{and} \ a_3^{f_n}(z) \equiv 0$$ which are holomorphic functions on $\mathbb{D}$ satisfying condition (i) of Theorem \ref{Similar to Grahl result 1} with $M = 1 \ \text{and} \ \epsilon = \frac{3}{8}$. We see that 
    $$f_n'(z) = \displaystyle\frac{-n \cos nz}{\sin^2 nz}\left(\displaystyle\frac{1}{2} + \displaystyle\frac{z}{8n}\right) + \frac{1}{\sin nz}\left(\frac{1}{8n}\right)$$ 
    Note that $f_n(z) = a_1^{f_n}(z)$ implies $\sin nz = 1$ and consequently $\cos nz = 0$. Then   $|f_n'(z)|$ is clearly bounded by 1 for all $z \in \mathbb{D}$ and $n \in \mathbb{N}$. Similarly we can see that $f_n(z) = a_2^{f_n}(z)$ implies $\sin nz = -1$ which leads to $\cos nz = 0$ and again $|f_n'(z)|$ is bounded by 1.
    Also it is clear that $f_n(z)$ omits $a_3^{f_n}(z)$ on $\mathbb{D}$. Thus condition (iii) of Theorem \ref{Similar to Grahl result 1} is also satisfied. 
    
\end{example}
\begin{example}[Condition (iii) is essential] 
    We again consider the family $\{f_n(z) = nz: n \in \mathbb{N}\}$ on $\mathbb{D}$. Taking $a_j^{f_n}(z) \equiv \displaystyle\frac{1}{j+1}, j \in \{1,2,3\}$ on $\mathbb{D}$ we can see that condition (i) of Theorem \ref{Similar to Grahl result 1} is satisfied with $\epsilon = \frac{1}{12}$ and $M = 1$ whereas condition (ii) is vacuously true.
\end{example}

In case $f(z)$ and $f'(z)$ share the functions $a^f_j(z)$ then it is easy to see that the condition $(ii)$ of Theorem \ref{Similar to Grahl result 1} is fulfilled and so we have the following Corollary which is a weaker form of Theorem \ref{Grahl and Nevo}.

\begin{corollary}\label{Similar to Grahl result 2}
  Let $\scF$ be a family of meromorphic functions on a domain $D \subseteq \C$. Suppose for each $f \in \scF$ there exist three holomorphic functions $a_1^f, a_2^f \ \text{and} \ a_3^f$ defined on $D$ and some  positive real numbers $\epsilon$ and $M$,with 
    $$ |a_j^f(z)| \leq M \ \text{and} \ |a_j^f(z) - a_k^f(z)| \geq \epsilon \ \forall z \in D \ \text{and} \ j,k \in \{1,2,3\} \ \text{with} \ j \neq k. $$ 
    such that $f$ and $f'$ share the functions $a_1^f, a_2^f \ \text{and} \ a_3^f$. 
Then $\scF$ is normal in $D$.  
\end{corollary}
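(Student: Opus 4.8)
The plan is to deduce Corollary~\ref{Similar to Grahl result 2} directly from Theorem~\ref{Similar to Grahl result 1}: once full sharing of the functions $a_1^f, a_2^f, a_3^f$ by $f$ and $f'$ is assumed, all three hypotheses of Theorem~\ref{Similar to Grahl result 1} fall out with the \emph{same} pair of constants $\epsilon, M$, and the normality of $\scF$ then follows immediately. So first I would fix $f \in \scF$, together with its wandering shared functions $a_1^f, a_2^f, a_3^f$ and the constants $\epsilon, M$ (which are taken uniform over the family $\scF$, exactly as in Theorem~\ref{Similar to Grahl result 1}), and then verify the three conditions one at a time.

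Condition~(i) of Theorem~\ref{Similar to Grahl result 1}, namely $|a_j^f(z)| \le M$ and $|a_j^f(z) - a_k^f(z)| \ge \epsilon$ for all $z \in D$ and $j \ne k$, is precisely the separation-and-boundedness hypothesis placed on the $a_j^f$ in the Corollary, so nothing needs to be done there. Next, by definition $f$ and $f'$ sharing $a_j^f$ means $f(z) - a_j^f(z) = 0 \iff f'(z) - a_j^f(z) = 0$ on $D$; in particular the implication $f'(z) = a_j^f(z) \Rightarrow f(z) = a_j^f(z)$ holds for each $j$, which is exactly condition~(ii) of Theorem~\ref{Similar to Grahl result 1}. (The presence of poles of $f$ causes no trouble: since each $a_j^f$ is holomorphic, $f - a_j^f$ has a pole rather than a zero at any pole of $f$, so the zero sets being compared are unaffected.) Finally, for condition~(iii) I would use the reverse implication of sharing: if $f(z) = a_j^f(z)$ at some $z \in D$, then also $f'(z) = a_j^f(z)$, and hence $|f'(z)| = |a_j^f(z)| \le M$ by condition~(i); thus condition~(iii) holds, again with the constant $M$.

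With all three conditions of Theorem~\ref{Similar to Grahl result 1} verified for every $f \in \scF$ relative to the fixed pair $(\epsilon, M)$, that theorem yields the normality of $\scF$ in $D$, which completes the argument. I do not expect a genuine obstacle in this reduction; the only points deserving a sentence of care are the harmless behaviour at the poles of $f$ noted above and the observation that the single bound $M$ simultaneously controls the functions $a_j^f$ (as in the Corollary's hypothesis) and the derivative values $|f'(z)|$ at the solutions of $f(z) = a_j^f(z)$ (as required by condition~(iii)).
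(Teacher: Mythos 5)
Your reduction is correct and is exactly the route the paper takes: the corollary is obtained by observing that sharing of the $a_j^f$ yields condition (ii) of Theorem \ref{Similar to Grahl result 1} directly, and condition (iii) via $f(z)=a_j^f(z)\Rightarrow f'(z)=a_j^f(z)$ together with the bound $|a_j^f|\leq M$, while condition (i) is the hypothesis itself. Your added remarks about poles and the uniform constants $(\epsilon,M)$ are harmless clarifications of the same argument.
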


Let $f(z)$ be a holomorphic function on a domain $D \subset \C$. Corresponding to this function we define a function $F: D \rightarrow \PtwoC$ whose representation 
$\Tilde{F}: D \rightarrow \C^3 \setminus \{0\}$ is given by 
\begin{equation}\label{F def}
    \Tilde{F}(z) = (1,z,f(z))
\end{equation}

Note that this representation is always a reduced representation with $\Tilde{F}_0 \neq 0$ everywhere on $D$. The derivative of $F$ (or more precisely the representation of derivative of $F$) shall be given by 
\begin{equation}\label{derivative defn}
   \nabla F = (1, 1, f'(z)) 
\end{equation}

which again happens to be in reduced representation for the whole domain $D$.

Taking $f$ to be a meromorphic function in representation \eqref{F def}  would have forced us to  use two different holomorphic functions in  \eqref{F def} and we would not have been able to obtain such a convenient form of derivative as in equation \eqref{derivative defn}.

We need the following lemma to relate the normality of families of complex valued holomorphic functions with the normality of  corresponding functions lifted to $\PtwoC$.

\begin{lemma}\label{Normality in new representation}
    Let $D$ be a domain in $\C$ and $\scF$ be a family of complex valued holomorphic functions on $D$. The family  $ \scF ^* =  \{F: D \rightarrow \PtwoC  \ \text{with} \  \Tilde{F}(z) = (1,z,f(z)) : f \in \scF \}$ is normal in $D$ if and only if $\scF$ is normal in $D$.
\end{lemma}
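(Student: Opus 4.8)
The plan is to prove both directions by moving between a holomorphic function $f$ on $D$ and the reduced representation $\tilde F(z)=(1,z,f(z))$, exploiting that the first coordinate is the constant $1$, so that convergence in $\PtwoC$ is controlled entirely by the third coordinate. Throughout I will use the criterion stated in the subsection on normal families: a sequence $F_n\colon D\to\PtwoC$ converges locally uniformly to a holomorphic $F$ if and only if, locally near each point, the $F_n$ admit reduced representations whose components converge uniformly to a reduced representation of $F$.

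For the ``only if'' direction, suppose $\scF$ is normal in $D$ and let $\{F_n\}\subset\scF^*$ correspond to $\{f_n\}\subset\scF$. By normality of $\scF$, some subsequence $f_{n_k}$ converges locally uniformly either to a holomorphic $f$ on $D$ or to $\infty$. In the first case $(1,z,f_{n_k}(z))\to(1,z,f(z))$ locally uniformly, and since $(1,z,f(z))$ is a reduced representation (first coordinate nowhere zero), $F_{n_k}\to F$ in $\scF^*$. In the case $f_{n_k}\to\infty$ locally uniformly, I would rescale: on a fixed compact neighbourhood $K$ of a point, for large $k$ the function $f_{n_k}$ is zero-free on $K$, so $(1/f_{n_k}(z),\, z/f_{n_k}(z),\, 1)$ is another reduced representation of $F_{n_k}$ on $K$, and its components converge uniformly on $K$ to $(0,0,1)$, which is a reduced representation of the constant map $[0:0:1]$. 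Hence $F_{n_k}$ converges locally uniformly in $D$, so $\scF^*$ is normal.

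For the ``if'' direction, suppose $\scF^*$ is normal and let $\{f_n\}\subset\scF$, with $\{F_n\}\subset\scF^*$ the associated maps. Pass to a subsequence $F_{n_k}\to F$ locally uniformly, $F$ holomorphic into $\PtwoC$. Fix $a\in D$; on a neighbourhood $U$ of $a$ write the convergent reduced representations $\tilde F_{n_k}=(g_{k,0},g_{k,1},g_{k,2})\to(g_0,g_1,g_2)=\tilde F$ uniformly on $U$, a reduced representation of $F$. Since $(1,z,f_{n_k}(z))$ is also a reduced representation of $F_{n_k}$, Remark \ref{reduced representations remark} gives a nowhere-zero holomorphic $h_k$ on $U$ with $g_{k,0}=h_k$, $g_{k,1}=h_k z$, $g_{k,2}=h_k f_{n_k}$. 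Now I distinguish two cases according to whether $g_0\equiv 0$ on $U$. If $g_0\not\equiv 0$, then $g_1/g_0\equiv z$ forces $g_0$ to be zero-free (a zero of $g_0$ would be a zero of $g_1$, contradicting reducedness unless $g_0\equiv0$), so $h_k=g_{k,0}\to g_0$ uniformly with $g_0$ zero-free, whence $f_{n_k}=g_{k,2}/g_{k,0}\to g_2/g_0$ uniformly on $U$; the limit is holomorphic. If instead $g_0\equiv0$ on $U$, then $g_1=\lim h_k z\equiv 0$ too, so reducedness of $\tilde F$ forces $g_2$ zero-free on $U$; then $|f_{n_k}|=|g_{k,2}|/|g_{k,0}|\to\infty$ uniformly on compacts of $U$ since $g_{k,2}\to g_2$ is bounded below and $g_{k,0}\to 0$ uniformly. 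A connectedness argument (the set where the locally obtained limit is ``finite'' versus ``$\infty$'' is open and closed) upgrades this to a global dichotomy on $D$, giving normality of $\scF$.

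The main obstacle is the bookkeeping in the ``if'' direction: the limit map $F$ need not have a representation with nowhere-vanishing first coordinate, and one must carefully extract from the abstract convergence in $\PtwoC$ the honest local-uniform convergence (or blow-up) of $f_{n_k}$ using the comparison factor $h_k$ from Remark \ref{reduced representations remark}, then patch the local conclusions together on the connected domain $D$. The ``only if'' direction is routine once the $\infty$ case is handled by the reciprocal representation.
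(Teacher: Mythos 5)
Your overall strategy is the same as the paper's (lift/compare via the factor $h_k$ from Remark \ref{reduced representations remark}, use the reciprocal representation $(1/f_{n_k}, z/f_{n_k},1)$ for the $f_{n_k}\to\infty$ case, and split the converse according to the behaviour of the limit first coordinate), but there is one step in your ``if'' direction whose justification is genuinely wrong. In Case 1 you claim that $g_0\not\equiv 0$ must be zero-free because ``a zero of $g_0$ would be a zero of $g_1$, contradicting reducedness.'' Reducedness of $(g_0,g_1,g_2)$ only forbids a \emph{common} zero of all three components; a point $z_0$ with $g_0(z_0)=g_1(z_0)=0$ but $g_2(z_0)\neq 0$ is perfectly compatible with it, so no contradiction arises and your dichotomy is not established by this argument. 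What actually rules out an isolated zero of $g_0$ is that $g_{k,0}=h_k$ is nowhere zero for every $k$ and converges uniformly to $g_0$, so Hurwitz's theorem forces $g_0$ to be either identically zero or zero-free on $U$; this is exactly the ``claim'' the paper proves, and without it your Case 1 would have to contend with limits of the form $g_2/g_0$ having poles, which your argument does not address.

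Apart from this, the proposal is sound and follows the paper's proof closely: the ``only if'' direction is identical, Case 2 ($g_0\equiv 0$, hence $h_k\to 0$, hence $g_1\equiv 0$, hence $g_2$ zero-free and $|f_{n_k}|=|g_{k,2}|/|h_k|\to\infty$) is the paper's argument verbatim, and your explicit open-and-closed patching of the local ``finite limit versus $\infty$'' dichotomy over the connected domain $D$ is a point the paper leaves implicit, so it is a welcome addition rather than a divergence. Repair the Case 1 step by invoking Hurwitz on the nowhere-vanishing functions $h_k$ (or equivalently state and prove the dichotomy for $g_0$ first, as the paper does) and the proof is complete.
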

 Using our representation of complex valued holomorphic functions as holomorphic curves in $\PtwoC$, and a judicious choice of hyperplanes in $\PtwoC$ we obtain the following normality criterion which requires among other things a boundedness condition on derivatives at fixed points of the respective functions.  
 \begin{theorem}\label{Normality theorem for complex valued holomorphic functions}
     Let $\scF$ be a family of complex valued holomorphic functions on a domain $D \subseteq \C$. Let $A$ and $B$ be two disjoint compact subsets of unit disk $\mathbb{D}$ not containing the origin and $M$ be a positive number. Suppose for each $f \in \scF$ there exist two holomorphic functions $a_1^f(z)$ and $a_2^f(z)$ defined on $D$ and having range in $A$ and $B$ respectively such that the following conditions are satisfied :
     \begin{enumerate}[(i)]
         \item  $f'(z) = a_j^f(z)$ implies $f(z) = a_j^f(z) $ and $f(z)= a_j^f(z)$ implies $|f'(z)| \leq M$ for $j \in \{1,2\}$. 
         \item $z$ is a fixed point of $f$ wherever $f'$ assumes 1 in $D$.
         \item $|f'(z)| \leq M$, whenever $z$ is a fixed point of $f$.
         
     \end{enumerate}
     Then $\scF$ is a normal family.
 \end{theorem}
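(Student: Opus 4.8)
The plan is to lift each $f\in\scF$ to a holomorphic curve $F\colon D\to\PtwoC$ with reduced representation $\widetilde F(z)=(1,z,f(z))$, apply Theorem~\ref{Generalization YPY} with $N=2$ (so $2N+1=5$) to the family $\scF^\ast=\{F:f\in\scF\}$, and then recover the normality of $\scF$ from Lemma~\ref{Normality in new representation}. Since normality is a local property and $A,B$ are compact, I would fix $z_0\in D$, choose a closed disc $\overline\Delta=\overline{\Delta(z_0,r)}\subset D$ and two distinct numbers $\lambda_1,\lambda_2\in(0,1)$, shrinking $r$ if necessary so that $1-\lambda_1z$ and $1-\lambda_2z$ have no zero on $\overline\Delta$; it then suffices to verify the hypotheses of Theorem~\ref{Generalization YPY} on $\Delta$ for a suitable $\delta>0$. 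Throughout, $\widetilde F(z)=(1,z,f(z))$ is a reduced representation with $F_0\equiv1$, and by \eqref{derivative defn} the induced reduced representation of $\nabla F$ is $\widetilde{\nabla F}(z)=(1,1,f'(z))$.

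For the five moving hyperplanes I would take those with (already $\|\cdot\|$-normalized, since $|a_j^f|<1$ on $A\cup B\subset\mathbb D$ and $|\lambda_i|<1$) coefficient vectors
\begin{align*}
&\alpha_1(z)=(-a_1^f(z),0,1),\qquad \alpha_2(z)=(-a_2^f(z),0,1),\qquad \alpha_3=(0,1,-1),\\
&\alpha_4=(1,-\lambda_1,0),\qquad \alpha_5=(1,-\lambda_2,0).
\end{align*}
The point is that $\langle\widetilde F,\alpha_1\rangle=f-a_1^f$, $\langle\widetilde F,\alpha_2\rangle=f-a_2^f$, $\langle\widetilde F,\alpha_3\rangle=z-f$, $\langle\widetilde F,\alpha_4\rangle=1-\lambda_1z$, $\langle\widetilde F,\alpha_5\rangle=1-\lambda_2z$, while $\langle\widetilde{\nabla F},\alpha_1\rangle=f'-a_1^f$, $\langle\widetilde{\nabla F},\alpha_2\rangle=f'-a_2^f$, $\langle\widetilde{\nabla F},\alpha_3\rangle=1-f'$, $\langle\widetilde{\nabla F},\alpha_4\rangle=1-\lambda_1$, $\langle\widetilde{\nabla F},\alpha_5\rangle=1-\lambda_2$. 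Hence condition~(i) of Theorem~\ref{Generalization YPY} asks, for $j=1,2$, that $f'(z)=a_j^f(z)$ imply $f(z)=a_j^f(z)$, which is hypothesis~(i) of the present theorem; for $j=3$ it asks that $f'(z)=1$ imply $f(z)=z$, which is hypothesis~(ii); and for $j=4,5$ it is vacuous because $\lambda_i\neq1$ forces $\nabla F$ to omit $H_4^F$ and $H_5^F$. Since moreover $1-\lambda_iz$ is zero-free on $\Delta$, $F$ also omits $H_4^F$ and $H_5^F$ there, so on $\Delta$ one has $F(z)\in\bigcup_{j=1}^5H_j^F(z)$ only when $f(z)\in\{a_1^f(z),a_2^f(z)\}$ or $f(z)=z$.

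It remains to check the uniform general-position estimate and conditions~(ii) and (iii). For general position, the ten $3\times3$ determinants $\det(\alpha_i^t,\alpha_j^t,\alpha_k^t)$ work out (up to sign) to one of $a_1^f-a_2^f$, $\lambda_m(a_1^f-a_2^f)$, $1-\lambda_m a_j^f$, or $\lambda_1-\lambda_2$; by $\mathrm{dist}(A,B)>0$, $\sup_{A\cup B}|\cdot|<1$, and $\lambda_1\neq\lambda_2$ (both nonzero) each is bounded below in modulus by a positive constant independent of $f$ and $z$, so $\D(H_1^F(z),\dots,H_5^F(z))>\delta_0$ for a fixed $\delta_0>0$. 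For condition~(ii): when $F(z)$ lies on one of the five hyperplanes, $|f(z)|\le\max\{\sup_A|\cdot|,\sup_B|\cdot|,\sup_{\overline\Delta}|z|\}$, so $\|\widetilde F(z)\|\le C$ for a constant $C$, and since $\langle F(z),H_0\rangle=1=|F_0(z)|$ and $\|H_0\|=1$, the left side of condition~(ii) equals $1/\|\widetilde F(z)\|\ge1/C$. For condition~(iii): at such $z$, hypothesis~(i) (in case $f(z)=a_j^f(z)$) or hypothesis~(iii) (in case $f(z)=z$) gives $|f'(z)|\le M$, hence $|\langle\nabla F(z),H_k^F(z)\rangle|\le M+1$ for $k=1,2,3$ and $=|1-\lambda_{k-3}|<2$ for $k=4,5$, while $|F_0(z)|^2=1$. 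Taking $\delta=\min\{\delta_0,1/C,1/\max(M+1,2)\}$ (and, if needed, smaller so that $\delta<1$), every hypothesis of Theorem~\ref{Generalization YPY} holds on $\Delta$; thus $\scF^\ast$ is normal on $\Delta$, hence on all of $D$ since $z_0$ was arbitrary, and Lemma~\ref{Normality in new representation} gives that $\scF$ is normal.

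The step I expect to be the real crux is the choice of the auxiliary hyperplanes $H_4^F$ and $H_5^F$: they must lie in \emph{uniform} general position with the three structural hyperplanes $H_1^F,H_2^F$ (reading off $a_1^f,a_2^f$) and $H_3^F$ (reading off simultaneously the fixed-point set $f(z)=z$ and the set $f'(z)=1$), be omitted by $\nabla F$ so that Theorem~\ref{Generalization YPY}(i) costs nothing for them, and be omitted by $F$ on $\Delta$ so that conditions~(ii) and (iii) need only be verified on $H_1^F\cup H_2^F\cup H_3^F$ — and it is exactly there that the bound $|f'(z)|\le M$ coming from hypotheses~(i) and (iii) makes condition~(iii) work. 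The rest is bookkeeping: the normalization $\|H\|=1$ is automatic because $A,B\subset\mathbb D$ and $|\lambda_i|<1$, and shrinking $\Delta$ to keep $1-\lambda_iz$ zero-free is harmless by locality of normality.
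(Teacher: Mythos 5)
Your proposal is correct and follows essentially the same route as the paper: lift $f$ to $\widetilde F=(1,z,f)$, invoke Lemma~\ref{Normality in new representation}, and apply Theorem~\ref{Generalization YPY} with the same five moving hyperplanes (the paper simply fixes $\lambda_1=\tfrac12$, $\lambda_2=\tfrac13$ and reduces to $D=\mathbb{D}$, while you keep general $\lambda_1,\lambda_2\in(0,1)$ and localize to a disc where $1-\lambda_iz$ is zero-free). Your write-up also supplies explicitly the verifications of conditions (i)--(iii) and the uniform general-position bound that the paper states as ``it can be seen,'' so there is nothing to correct.
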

That condition $(i)$ is essential  can be seen by the following example:
\begin{example}
 Let $\scF = \{f_n(z) = e^{nz} + z: n \in \mathbb{N}\}$ defined on $\C$. Here each $f_n'(z) = ne^{nz} + 1 \neq 1$ for any $z \in \C$ . Also it is clear that no member of this family can have any fixed point. But $\scF$ is not a normal family. 
\end{example}
Theorem \ref{Normality theorem for complex valued holomorphic functions} enables us to formulate a counterexample to \textbf{the converse of Bloch's principle}\cite{bergweiler2006bloch}: \emph{A family of holomorphic functions which have property P in common in a domain $D$ is (apt to be) a normal family in $D$ if P can not be possessed by non-constant entire functions.}
\begin{counterexample}
    Consider the function $f(z) = 2z \ \forall z \in \C$. Then taking $a_1^f(z) \equiv \frac{1}{2}, a_2^f(z) \equiv \frac{-1}{2}$ and $M = 2$ we see that $f$ satisfies all the conditions required in Theorem \ref{Normality theorem for complex valued holomorphic functions} and is a non-constant entire function.
\end{counterexample}
\section{Proofs}

To prove our main results we shall require the following generalized version of Zalcman lemma
\begin{lemma}[\cite{aladro1991criterion}]\label{Aladro Krantz}
    Let $\scF$ be a family of holomorphic maps of a domain $D$ in $\C$ into $\PnC$. The family $\scF$ is not normal on $D$ if and only if there exist sequences $\{f_n\} \subset \scF, \{z_n\}  \subset D$ with $z_n \rightarrow z_0 \in D, \{\rho_n\}$ with $ \rho_n  > 0 $ and $\rho_n \rightarrow 0$ such that 
    $$g_n(\xi) := f_n(z_n + \rho_n \xi )$$
    converges uniformly on compact subsets of $\C$ to a nonconstant holomorphic map $g$ of $\C$ into $\PnC$.
\end{lemma}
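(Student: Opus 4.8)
The plan is to establish this as the projective-space version of Zalcman's lemma, with the Fubini--Study spherical derivative replacing the classical one and a Marty-type criterion linking normality to local boundedness of that derivative. For a holomorphic map $f:D\to\PnC$ with reduced representation $\Tilde{f}=(f_0,\dots,f_N)$ I would work with
$$f^{\#}(z):=\frac{\left(\sum_{0\le i<j\le N}|f_if_j'-f_jf_i'|^2\right)^{1/2}}{\sum_{i=0}^{N}|f_i|^2},$$
the norm of the pullback under $f$ of the Fubini--Study form, which by Remark~\ref{reduced representations remark} is independent of the chosen reduced representation. Two facts drive the argument. The first is a \emph{Marty-type criterion}: $\scF$ is normal at a point $p$ if and only if $\{f^{\#}:f\in\scF\}$ is uniformly bounded on some neighbourhood of $p$, the nontrivial implication being an Arzel\`a--Ascoli argument carried out on suitably normalized reduced representations, using exactly the description of uniform convergence of maps into $\PnC$ recorded in the preliminaries. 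The second is the \emph{scaling law}: if $g(\xi)=f(z_n+\rho_n\xi)$ then $\Tilde{f}(z_n+\rho_n\xi)$ is a reduced representation of $g$, and differentiating introduces a factor $\rho_n$, so that $g^{\#}(\xi)=\rho_n\,f^{\#}(z_n+\rho_n\xi)$.

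For the direction in which the sequences are assumed to exist, suppose $g_n(\xi)=f_n(z_n+\rho_n\xi)\to g$ locally uniformly with $g$ nonconstant, and suppose toward a contradiction that $\scF$ is normal on $D$, in particular at $z_0$. By the Marty-type criterion there are a neighbourhood $U$ of $z_0$ and a constant $C$ with $f^{\#}\le C$ on $U$ for every $f\in\scF$. Fixing $\xi$, the point $z_n+\rho_n\xi$ lies in $U$ for large $n$, so the scaling law gives $g_n^{\#}(\xi)=\rho_n f_n^{\#}(z_n+\rho_n\xi)\le \rho_n C\to 0$. Since the convergence $g_n\to g$ forces $g_n^{\#}\to g^{\#}$ locally uniformly (derivatives of uniformly convergent holomorphic representations converge, and the limiting denominator is nonvanishing), we obtain $g^{\#}\equiv 0$, i.e.\ $g$ is constant, a contradiction.

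For the main direction, assume $\scF$ is not normal on $D$; then it fails to be normal at some $p\in D$, and I fix $r>0$ with $\overline{D(p,r)}\subset D$. By the Marty-type criterion the spherical derivatives are unbounded over $\scF$ on $\overline{D(p,r)}$, so there is a sequence $f_n\in\scF$ for which
$$\lambda_n:=\max_{|z-p|\le r}\left(r^2-|z-p|^2\right)f_n^{\#}(z)\longrightarrow\infty,$$
the maximum being attained at some $z_n$ with $|z_n-p|<r$ (the weight vanishes on the boundary). Setting $\rho_n:=\left(r^2-|z_n-p|^2\right)/\lambda_n=1/f_n^{\#}(z_n)$, we get $\rho_n\to0$, and I define $g_n(\xi)=f_n(z_n+\rho_n\xi)$. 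The scaling law gives $g_n^{\#}(0)=\rho_n f_n^{\#}(z_n)=1$, while the maximality of $\lambda_n$ yields, for $z_n+\rho_n\xi$ still in the disk,
$$g_n^{\#}(\xi)=\rho_n f_n^{\#}(z_n+\rho_n\xi)\le \frac{r^2-|z_n-p|^2}{\,r^2-|z_n+\rho_n\xi-p|^2\,}\longrightarrow 1$$
uniformly for $\xi$ in any fixed disk; here $\lambda_n\to\infty$ guarantees both that the displayed point stays in the disk for large $n$ and that the ratio tends to $1$. Hence $\{g_n^{\#}\}$ is locally uniformly bounded on $\C$, so by the Marty-type criterion $\{g_n\}$ is normal on $\C$, and after passing to a subsequence $g_n\to g$ locally uniformly for some holomorphic $g:\C\to\PnC$; the accompanying convergence $g_n^{\#}\to g^{\#}$ together with $g_n^{\#}(0)=1$ gives $g^{\#}(0)=1$, so $g$ is nonconstant. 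A further subsequence makes $z_n\to z_0\in\overline{D(p,r)}\subset D$, supplying the required centres.

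The main obstacle is the Marty-type criterion and the passage to the limit rather than the inequality chain, which is a routine consequence of the extremal choice of $z_n$. I must ensure that local boundedness of $f^{\#}$ genuinely produces, via Arzel\`a--Ascoli on normalized reduced representations, a subsequence converging to an everywhere-defined, nondegenerate holomorphic map into $\PnC$, and that $f\mapsto f^{\#}$ is continuous under this mode of convergence so that $g^{\#}(0)=1$ certifies nonconstancy. The delicate bookkeeping is keeping control of reduced representations under both the rescaling and the limit---verifying that the limiting representation has no common zeros and is not identically zero---which is precisely where the convergence description of the preliminaries must be invoked carefully.
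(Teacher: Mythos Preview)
The paper does not prove this lemma: it is stated with a citation to Aladro and Krantz \cite{aladro1991criterion} and then used as a black box in the proof of Theorem~\ref{Generalization YPY}, so there is no ``paper's own proof'' to compare against. Your sketch is the standard Zalcman-type argument transported to $\PnC$ via the Fubini--Study spherical derivative and the associated Marty criterion, which is indeed how the result is established in the literature; the outline is correct, with the genuine work residing exactly where you flag it---proving the Marty-type equicontinuity criterion for maps into $\PnC$ and verifying that $f\mapsto f^{\#}$ is continuous under locally uniform convergence of reduced representations.
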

Also from the degenerate second main theorem in Nevanlinna theory we have the following fact: 
\begin{lemma}[see \cite{ru2001nevanlinna}, p.141]\label{picard extension}
 Let $f: \C \rightarrow \PnC$ be a holomorphic map, and $H_1,\hdots, H_q(q \geq 2N+1)$ be hyperplanes in $\PnC$ in general position. If for each $j= 1,\hdots ,q$, either $f(\C) $ is contained in $H_j$, or $f(\C)$ omits $H_j$, then $f$ is constant.   
\end{lemma}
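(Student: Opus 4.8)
The plan is to lift everything into $\PtwoC$ and apply Theorem \ref{Generalization YPY} with $N=2$. By Lemma \ref{Normality in new representation}, it suffices to prove that the lifted family $\scF^{*} = \{F : \tilde F(z)=(1,z,f(z)),\ f\in\scF\}$ is normal, and since normality is a local property, I fix $z_0\in D$, take a small disk $U$ with $\overline U\subset D$, and show $\scF^{*}|_U$ is normal by producing, for each $F\in\scF^{*}$, five moving hyperplanes in $\PtwoC$ in general position (with a uniform lower bound on $\D$) that verify the three hypotheses of Theorem \ref{Generalization YPY}. Recall $\tilde F=(1,z,f)$ and, by \eqref{derivative defn}, $\nabla F=(1,1,f')$, both reduced with first coordinate $f_0\equiv 1$.

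Three \emph{active} hyperplanes encode the data: $\alpha_1^{f}(z)=(-a_1^{f}(z),0,1)$, $\alpha_2^{f}(z)=(-a_2^{f}(z),0,1)$ and $\alpha_3=(0,-1,1)$. All are normalized since $|a_j^{f}|<1$. Then $\langle\tilde F,H_j^{f}\rangle=f-a_j^{f}$ and $\langle\nabla F,H_j^{f}\rangle=f'-a_j^{f}$ for $j=1,2$, while $\langle\tilde F,H_3\rangle=f-z$ and $\langle\nabla F,H_3\rangle=f'-1$. With these identifications, hypothesis (i) of the theorem ($f'=a_j^{f}\Rightarrow f=a_j^{f}$) and hypothesis (ii) ($f'=1\Rightarrow f=z$) say exactly that $\nabla F\in H_k^{f}\Rightarrow F\in H_k^{f}$ for $k=1,2,3$, i.e. condition (i) of Theorem \ref{Generalization YPY} for these three hyperplanes.

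Two \emph{auxiliary} constant hyperplanes $\alpha_4=(1,-1/p_4,0)$, $\alpha_5=(1,-1/p_5,0)$ complete the system, where $p_4,p_5$ are chosen as distinct points of $\C\setminus(\overline U\cup\overline{\mathbb D})$; these are normalized since $|p_\ell|>1$. Because $\langle\tilde F,H_4\rangle=1-z/p_4\neq0$ for $z\in U$ (as $p_4\notin\overline U$) and likewise for $H_5$, the map $F$ omits $H_4,H_5$ on $U$ for every $f$; and since $\langle\nabla F,H_4\rangle=1-1/p_4\neq0$ (as $|p_4|>1$, so $p_4\neq1$) and similarly for $H_5$, the map $\nabla F$ omits them too, so condition (i) of Theorem \ref{Generalization YPY} is vacuous for $k=4,5$. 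Consequently the locus where $F(z)\in\bigcup_{j=1}^{5}H_j$ on $U$ is just $\{f=a_1^{f}\}\cup\{f=a_2^{f}\}\cup\{f=z\}$. At such points $|f|$ is bounded (by $\max_A|\cdot|,\max_B|\cdot|<1$, or by $\max_{\overline U}|z|$), so $\|\tilde F\|=\sqrt{1+|z|^2+|f|^2}$ is bounded and, as $\langle\tilde F,H_0\rangle\equiv1$, condition (ii) holds for suitable $\delta$; at the same points hypotheses (i) and (iii) of the theorem give $|f'|\le M$, whence $|\langle\nabla F,H_k\rangle|\le M+1$ for $k=1,2,3$ and is a fixed constant for $k=4,5$, so (with $f_0\equiv1$) condition (iii) holds. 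Finally, the ten generating determinants are read off directly: those among $\{H_1,H_2,H_3\}$ and those pairing $H_1,H_2$ with an auxiliary hyperplane are, up to the factors $1/p_4,1/p_5$, equal to $a_1^{f}-a_2^{f}$ (bounded below by $\mathrm{dist}(A,B)>0$), while the remaining ones are $1-a_j^{f}/p_\ell$ or $1/p_4-1/p_5$, each bounded away from $0$; as $a_1^{f}(z),a_2^{f}(z)$ range over the compacta $A,B$, these bounds are uniform, giving $\D(H_1^{f},\ldots,H_5^{f})>\delta$.

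The main obstacle is the choice of the two auxiliary hyperplanes, which must simultaneously (a) complete $H_1,H_2,H_3$ to a system in general position with a bound uniform over the whole family, (b) be omitted by $F$ so as to create no uncontrolled intersection points where conditions (ii)--(iii) might fail, and (c) be omitted by $\nabla F$ so that condition (i) is automatic. No single constant hyperplane is omitted by every $F$ on all of $D$ (only $\{x_0=0\}$ is, and it is coplanar with $H_1,H_2$), which is precisely why the argument must be localized; the disjointness, compactness, and separation-from-$0$ of $A,B$ are what turn the pointwise nonvanishing of the determinants into the uniform bound $\D>\delta$. Once the five hyperplanes are in place, Theorem \ref{Generalization YPY} gives normality of $\scF^{*}|_U$, hence of $\scF^{*}$ at $z_0$; as $z_0$ was arbitrary, $\scF^{*}$ is normal in $D$, and Lemma \ref{Normality in new representation} yields normality of $\scF$.
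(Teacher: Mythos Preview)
Your proposal does not address the stated lemma at all. Lemma~\ref{picard extension} asserts that a single holomorphic map $f:\C\to\PnC$ which, for each of $q\ge 2N+1$ hyperplanes in general position, is either contained in or omits that hyperplane, must be constant. Your write-up, by contrast, is a proof sketch for Theorem~\ref{Normality theorem for complex valued holomorphic functions}: you speak of a family $\scF$ of complex-valued holomorphic functions on a domain $D$, the lifted family $\scF^{*}$, the wandering functions $a_1^{f},a_2^{f}$, fixed points, the bound $M$, and you invoke Lemma~\ref{Normality in new representation} and Theorem~\ref{Generalization YPY}. None of these objects appear in the hypotheses or conclusion of Lemma~\ref{picard extension}; there is no family, no domain $D\subset\C$, no shared functions, and no normality to establish.

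Moreover, even as a strategy the route is circular: the paper's proof of Theorem~\ref{Generalization YPY} explicitly relies on Lemma~\ref{picard extension} (it is applied to the Zalcman limit $g$ to force one of the $\langle g,H_j(z_0)\rangle$ to have a zero without vanishing identically). You therefore cannot deduce Lemma~\ref{picard extension} from Theorem~\ref{Generalization YPY}. Note also that the paper does not supply its own proof of this lemma; it is quoted as a known consequence of the degenerate Second Main Theorem and referenced to \cite{ru2001nevanlinna}. If you wish to write an independent proof, the appropriate ingredients are Nevanlinna theory in $\PnC$ (Cartan's Second Main Theorem or its defect relation), not the normality machinery developed later in the paper.
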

\begin{proof}[Proof of Theorem \ref{Generalization YPY}]
    Suppose $\scF$ is not normal on $D$. Then by lemma \ref{Aladro Krantz} there exist sequences $\{f_n\} \subset \scF, \{z_n\}  \subset D$ with $z_n \rightarrow z_0 \in D, \{\rho_n\}$ with $ \rho_n  > 0 $ and $\rho_n \rightarrow 0$ such that 
    $$g_n(\xi) := f_n(z_n + \rho_n \xi )$$
    converges uniformly on compact subsets of $\C$ to a nonconstant holomorphic map $g : \C \rightarrow \PnC$. Corresponding to each such $f_n$ let  $H_j^{f_n}$, $j= 1,2, \hdots, 2N+1$, be the moving hyperplanes given by the hypothesis of the theorem. Let each $H_j^{f_n}$ be written in normalized representation as 
    $$H_j^{f_n} = \{<z,\alpha_j^{f_n}(z)> = 0 \}$$
    where $\alpha_j^{f_n}(z) = (\alpha_{j,0}^{f_n}(z), \alpha_{j,1}^{f_n}(z), \hdots, \alpha_{j,N}^{f_n}(z))$ is a non-zero  vector in $\C^{N+1}$ for any $z \in D$ with $$\max_{k=0}^{N}|\alpha_{j,k}^{f_n}(z)| = 1$$ 
    By compactness of $A = \{(\alpha_0, \hdots , \alpha_n) \in \C^{N+1} : \displaystyle\max_{k=0}^{N}|\alpha_k| = 1\}$ it follows that for any $j \in \{1,2,\hdots, 2N+1\}$ there exist a subsequence of $\alpha_j^{f_n}(z)$ which we again call $\alpha_j^{f_n}(z)$ that converges to $\alpha_j(z)$ locally uniformly on $D$ with $\alpha_j(z) \in A$ for any $z \in D$. Let $H_j(z)$ be the moving hyperplane defined by $\alpha_j(z) , j = 1,2, \hdots , 2N+1$. By  continuity of the determinant function it is clear that for any $z \in D$ 
    $$\D(H_1(z),\hdots, H_{2N+1}(z) ) \geq \inf \{\D (H_1^{f_n}(z),\hdots, H_{2N+1}^{f_n}(z)): n \in \mathbb{N}\} \geq \delta > 0$$ and so the hyperplanes $H_1(z), \hdots, H_{2N+1}(z)$ are in general position for any $z \in D$ and in particular at $z_0$. As $g: \C \rightarrow \PnC$ is a non constant  function, by lemma \ref{picard extension} we may assume that $<g(\xi),H_1(z_0)>$ does not vanish identically and has at least one zero in $\C$. Let $\xi_0$ be such a zero. That is
    \begin{equation}\label{g in first hyperplane}
     <g(\xi_0), H_1(z_0)> = 0   
    \end{equation}
    
    and there exists a small neighbourhood $U$ of $\xi_0$ such that $<g(\xi),H_1(z_0)>$  has no other zero in $U$. Moreover, each $g_n$ has a reduced representation
    $$\Tilde{g_n}(\xi) = (g_{n,0}(\xi),\hdots, g_{n,N}(\xi)) = (f_{n,0}(z_n + \rho_n \xi), \hdots, f_{n,N}(z_n + \rho_n \xi) )$$
    on $U$ such that for each $i = 0,1, \hdots , N$ the sequence $\{g_{n,i}: n \in \mathbb{N}\}$ converges uniformly on $U$ to a holomorphic function $g_i$ with $$\Tilde{g} = (g_0, \hdots, g_N)$$ as the reduced representation of $g$ on $U$. Shrinking the neighborhood $U$ if necessary we can see that $\displaystyle\sum_{i=0}^N\alpha_{1,i}^{f_n}(z_n + \rho_n\xi)g_{n,i}(\xi)$ converges uniformly on $U$ to $\displaystyle\sum_{i=0}^Na_{1,i}(z_0)g_{i}(\xi)$. 
Note that from equation \eqref{g in first hyperplane}, 
$$\sum_{i=0}^N \alpha_{1,i}(z_0)g_i(\xi_0) = 0$$
    So by Hurwitz's theorem, there exists a  sequence $\xi_n$ tending to $\xi_0$ such that, for large $n$, $\displaystyle\sum_{i=0}^N\alpha_{1,i}^{f_n}(z_n + \rho_n\xi_n)g_{n,i}(\xi_n) = 0$, that is 
    \begin{equation}\label{f in hyperplane}
       \displaystyle\sum_{i=0}^N\alpha_{1,i}^{f_n}(z_n + \rho_n\xi_n)f_{n,i}(z_n + \rho_n\xi_n) = 0 
    \end{equation}
  which implies   $ f_{n}(z_n + \rho_n\xi_n) \in H_1^{f_n}(z_n + \rho_n\xi_n)$ and by condition $(ii)$ of the hypothesis we shall have 
  $$|f_{n,0}(z_n + \rho_n \xi_n)| \geq \delta ||\Tilde{f_n}(z_n + \rho_n \xi_n)||$$
  That is 
  $$|g_{n,0}( \xi_n)| \geq \delta ||\Tilde{g_n}(\xi_n)||$$
    Taking $n \rightarrow \infty$ we get 
    $$|g_{0}( \xi_0)| \geq \delta ||\Tilde{g}(\xi_0)|| > 0$$
    Thus, $g_0(\xi_0) \neq 0$ and further shrinking $U$ if necessary we may assume that $g_0(\xi) \neq 0 \ \forall \ \xi \in U$. This shall imply $g_{n,0}(\xi) \neq 0$ for $ \xi \in U$ when $n$ is sufficiently large. 
    For each such $n$ 
    $$(g_{n,0}^2(\xi), W(g_{n,0}, g_{n,1})(\xi), \hdots,W(g_{n,0}, g_{n,N})(\xi) )$$ 
    is then a reduced representation of $\nabla g_n(\xi)$ on $U$. Also note that for any $i =1,2, \hdots, N$ and $\xi \in U$ the Wronskian 
    \begin{equation}\label{wronskian f and g}
       W(g_{n,0}, g_{n,i})(\xi) = \rho_n W(f_{n,0}, f_{n,i})(z_n + \rho_n \xi)
    \end{equation}
   
    From \eqref{f in hyperplane} and condition $(iii)$ of the hypothesis we shall have for any $k= 2,3,\hdots, 2N + 1$
    
    $$\frac{|<\left(f_{n,0}^2,W(f_{n,0},f_{n,1}) ,\hdots, W(f_{n,0},f_{n,N}) \right), (\alpha^{f_n}_{k,0},\hdots, \alpha^{f_n}_{k,N} ) >|}{|f_{n,0}^2|} \leq \frac{1}{\delta}$$
    where the LHS of above inequality is evaluated at $(z_n + \rho_n \xi_n)$
    $$\implies \left|\alpha^{f_n}_{k,0}(z_n + \rho_n \xi_n) + \sum_{i=1}^{N}\frac{\alpha^{f_n}_{k,i}(z_n + \rho_n \xi_n)W(f_{n,0},f_{n,i})(z_n + \rho_n \xi_n)}{f_{n,0}^2(z_n + \rho_n \xi_n)}  \right | \leq \frac{1}{\delta}$$
Using \eqref{wronskian f and g} we get
$$\left|\alpha^{f_n}_{k,0}(z_n + \rho_n \xi_n) + \sum_{i=1}^{N}\frac{\alpha^{f_n}_{k,i}(z_n + \rho_n \xi_n)W(g_{n,0},g_{n,i})(\xi_n)}{\rho_n g_{n,0}^2(\xi_n)}  \right | \leq \frac{1}{\delta}$$
\begin{equation}\label{sum of wronskians}
\implies \left|\rho_n\alpha^{f_n}_{k,0}(z_n + \rho_n \xi_n) + \sum_{i=1}^{N}\frac{\alpha^{f_n}_{k,i}(z_n + \rho_n \xi_n)W(g_{n,0},g_{n,i})(\xi_n)}{ g_{n,0}^2(\xi_n)}  \right | \leq \frac{\rho_n}{\delta}
\end{equation}
Define for any $k= 2,3, \hdots, 2N +1$ $$\phi_k(\xi) = \sum_{i=1}^{N}\frac{\alpha_{k,i}(z_0)W(g_{0},g_{i})(\xi)}{ g_{0}^2(\xi)} , \xi \in U  $$
Taking $n \rightarrow \infty $ in equation\eqref{sum of wronskians}, we obtain
\begin{equation}\label{phi is zero at xi}
    \phi_k(\xi_0) = 0 \ \text{for} \ k= 2,3, \hdots, 2N +1
\end{equation}

Now we claim that there are at most $N$ hyperplanes in $\{H_k: k= 2,3, \hdots, 2N +1\}$ such that $\phi_k(\xi) \equiv 0$ on $U$.
Suppose on the contrary that $\phi_2(\xi) \equiv \cdots \phi_{N+2}(\xi) \equiv 0$ on $U$. Since $$\phi_j(\xi) = \left( \sum_{i=1}^{N}\alpha_{j,i}(z_0)\frac{g_i}{g_0}\right)' \equiv 0,$$
there exist complex numbers $c_j$ such that 
$$\sum_{i=1}^{N}\alpha_{j,i}(z_0)\frac{g_i}{g_0} \equiv c_j \ \ \  \text{on} \ U \ \text{for} \ j= 2, \hdots N+2  $$

Since $H_1(z_0), \hdots, H_{2N+1}(z_0)$ are in general position, the system of equations 
$$\sum_{i=1}^{N}\alpha_{j,i}(z_0)x_i \equiv c_j \ \text{for} \ j= 2, \hdots , N+2 $$
 has no solution or the solution is unique. Thus $\Tilde{g}$ is uniquely determined by $g_0 \neq 0$ in $U$. Hence $g$ is constant on $U$ which is a contradiction. So the claim is true.
 By the validity of our claim we can suppose without any loss of generality that $\phi_k(\xi) \not\equiv 0 $ on $U$ for $k= 2, \hdots, N+1$. Also for each $k \in \{2, \hdots, N+1\}$ 
 $$\rho_n\alpha^{f_n}_{k,0}(z_n + \rho_n \xi) + \rho_n\sum_{i=1}^{N}\frac{\alpha^{f_n}_{k,i}(z_n + \rho_n \xi)W(f_{n,0},f_{n,i})(z_n + \rho_n\xi)}{ f_{n,0}^2(z_n + \rho_n\xi)} 
$$
converges uniformly to $\phi_k(\xi)$ on $U$. Again by Hurwitz theorem and equation \eqref{phi is zero at xi} there exists a sequence $\zeta_n \rightarrow \xi_0$ such that 
$$<\nabla f_n(z_n + \rho_n\zeta_n), H^{f_n}_k(z_n + \rho_n\zeta_n)> = 0$$
    Then by condition $(i)$ of the hypothesis 
    $$< f_n(z_n + \rho_n\zeta_n), H^{f_n}_k(z_n + \rho_n\zeta_n)> = 0$$

Taking $n \rightarrow \infty$ we get
$$<g(\xi_0), H_k(z_0)> = 0 \ \text{for} \ k = 2, \hdots, N+1 $$
Noting equation \eqref{g in first hyperplane} we shall have 
$$<g(\xi_0), H_k(z_0)> = 0 \ \text{for} \ k = 1,2, \hdots, N+1$$
This is a contradiction to the fact that the hyperplanes $\{H_k(z_0): k = 1, 2, \hdots, N+1\}$ are in general position. Hence $\scF$ is normal on D.

\end{proof}
\begin{proof}[Proof of Theorem \ref{Similar to Grahl result 1}]
    We can assume that $M = 1$ for otherwise we may replace the functions $a_j^f(z)$ by $\displaystyle\frac{a_j^f(z)}{M}, j \in \{1,2,3\}$ and consider normality of the family $\scF^* = \{\frac{f}{M}: f \in \scF\}$ which is equivalent to the normality of $\scF.$ 
    
    Now for any $f \in \scF, z \in D, j \in \{1,2,3\}$; let $H_j^f(z)$ be the hyperplane defined by $<z, \alpha_j^f(z)> = 0,$ where $\alpha_j^f(z) = (a_j^f(z), -1) \in \C^2$ gives a normalized representation for $H_j^f(z)$. Writing the meromorphic function $f(z) = \displaystyle\frac{f_1(z)}{f_0(z)}$ as $f(z) = [f_0(z),f_1(z)]$, it can be seen that for any $z_0 \in D$, $f(z_0) = a_j^f(z_0)$ if and only if $f(z_0) \in H_j^f(z_0),$ and  $f'(z_0) = a_j^f(z_0)$ if and only if $\nabla f(z_0) \in H_j^f(z_0).$ 
    It can easily be computed that
    \begin{equation}\label{Hyperplanes independent in GR1}
     \D(H_1^f(z), H_2^f(z), H_3^f(z)) \geq \epsilon^3 \ \forall f \in \scF \ \text{and} \ z \in D   
    \end{equation}

    Further $f(z) \in \bigcup_{j=1}^3 H_j^f(z)$ implies $f(z) = a_j^f(z)$ for some $j \in \{1,2,3\}$ and we shall have
    \begin{equation}\label{Second condition of Theorem true in GR1}
    \displaystyle\frac{|f_0(z)|}{||f(z)||} = \displaystyle\frac{1}{\sqrt{1 + \left|\frac{f_1(z)}{f_0(z)}\right|^2}} \geq \displaystyle\frac{1}{\sqrt{2}}    
    \end{equation}
and 
\begin{equation}\label{Third condition of theorem true in GR1}
    \frac{|<\nabla f(z), H_k^f(z)>|}{|f_0(z)|^2} = |a_k^f(z) - f'(z)| \leq 2
\end{equation}
    Taking $\delta = \min\{\epsilon^3, \frac{1}{2}\}$, we see from equations \eqref{Hyperplanes independent in GR1}, \eqref{Second condition of Theorem true in GR1} and \eqref{Third condition of theorem true in GR1} that $\scF$ satisfies all the conditions of the Theorem \ref{Generalization YPY}. Hence $\scF$ is normal in $D.$
\end{proof}
\begin{proof}[Proof of Lemma \ref{Normality in new representation}]
  To prove this lemma it is enough to show that a sequence $\{f_n\} \subset \scF$ converges locally uniformly on $D$ to a holomorphic function $f$ on $D$ or converges locally uniformly to $\infty$  if and only if the sequence $\{F_n\}$ with $\Tilde{F_n} = (1,z,f_n(z))$ converges locally uniformly on  $D$ to a holomorphic function $F: D \rightarrow \PtwoC$. 
  
   First suppose that $\{f_n\} \subset \scF$ converges locally uniformly on $D$ to a holomorphic function $f$ then clearly $F_n$ converges locally uniformly to $F$ whose reduced representation is $\Tilde{F} = (1,z,f(z))$. On the other hand if $f_n $ converges locally uniformly to $\infty$ then we note that for any $z_0 \in D \ \exists $ a neighborhood $U$ such that for sufficiently large $n, f_n(z) \neq 0 \ \forall \ z \in U$. On such a neighborhood $U$ we can see that $$\left(\frac{1}{f_n(z)},\frac{z}{f_n(z)},1\right)$$ is again a reduced representation of $F_n$ which converges uniformly on $U$ to the function $F: U \rightarrow \PtwoC$ with reduced representation $\Tilde{F}(z) \equiv  \left(0,0,1\right)$.

Conversely, suppose $F_n \rightarrow F$.Let $\zeta_0 \in D$. Then there exists a neighborhood $U$ of $\zeta_0$ and reduced representation $(\alpha_n(z), \beta_n(z), \gamma_n(z))$ of $F_n(z)$ in $U$ such that $\alpha_n \rightarrow \alpha, \beta_n \rightarrow \beta$ and $\gamma_n \rightarrow \gamma$  uniformly on $U$ where $(\alpha(z), \beta(z), \gamma(z))$ is a reduced representation of $F(z)$ in $U$. As $\Tilde{F_n}(z) = (1,z,f_n(z))$ is also a reduced representation of $F_n(z)$ there  are no where zero holomorphic functions $h_n(z)$ on $U$ such that for all $n \in \mathbb{N}$
\begin{align}\label{two reduced representations 1}
    \alpha_n(z) &= 1.h_n(z)\\
    \label{two reduced representations 2}
    \beta_n(z) &= z.h_n(z)\\
    \label{two reduced representations 3}
    \gamma_n(z) &= f_n(z)h_n(z)
\end{align}

\textbf{Claim}:
$\alpha(z)$ is either identically zero or nowhere zero on $U$. 

Suppose on the contrary that $\alpha(z) \not\equiv 0$ and $\alpha(z_0) = 0$ for some $z_0 \in U$. Then by Hurwitz theorem there exist a sequence $\{z_n\} \subset U$ converging to $z_0$ such that for sufficiently large $n, \alpha_n(z_n) = 0$. Using this in equation \eqref{two reduced representations 1}  we obtain $h_n(z_n) = 0$ which is a contradiction. This proves our claim. 

Now we consider both the possible cases:

\textbf{Case 1}: $\alpha(z) \neq 0 $ for all $z \in U$

Then by uniform convergence of $\alpha_n(z)$ to $\alpha(z)$ in $U$, there exists $n_0 \in \mathbb{N}$ such that for $n \geq n_0$ 
$$\alpha_n(z) \neq 0 \ \forall z \in U$$

Using equation \eqref{two reduced representations 3} we can write for $z \in U \ \text{and} \ n \geq n_0$
$$f_n(z) = \frac{\gamma_n(z)}{\alpha_n(z)}\frac{\alpha_n(z)}{h_n(z)} \rightarrow \frac{\gamma(z)}{\alpha(z)} .1$$
Thus $f_n(z)$ converges  uniformly  to the holomorphic function $\displaystyle\frac{\gamma(z)}{\alpha(z)}$ on $U$.

\textbf{Case 2}: $\alpha(z) \equiv 0 $ on $ U$

By equation \eqref{two reduced representations 1} we shall have $h_n(z) \rightarrow 0$  uniformly on $U$.
With equation \eqref{two reduced representations 2} this implies that $\beta_n(z) \rightarrow 0$  uniformly on $U$. That is $\beta(z) \equiv 0$ on $U$. Therefore, $\gamma(z) \neq 0 \ \forall z \in U$ so that $(\alpha(z), \beta(z), \gamma(z))$ is a reduced representation of $F(z)$ on $U$.
As $$f_n(z)h_n(z)= \gamma_n(z) \rightarrow \gamma(z) \ \text{and} \ h_n(z) \rightarrow 0$$  uniformly on $U$, it follows that $f_n \rightarrow \infty$  uniformly on $U$.

\end{proof}

\begin{proof}[Proof of Theorem \ref{Normality theorem for complex valued holomorphic functions}]
  Since normality is a local property we take $D = \mathbb{D}$. We shall show that the family $\scF^*$ as defined in lemma \ref{Normality in new representation} is normal in $\mathbb{D}$. For any $f\in \scF$ we make the following choice of $5$ hyperplanes 
  $$H^f_j(z) = \{w \in \PtwoC| <w,\alpha^f_j(z)> = 0\},  j =1, \hdots, 5 $$
  where 
  \begin{align*}
  \alpha^f_1(z) &= (a_1^f(z),0, -1)\\
  \alpha^f_2(z) &= (a_2^f(z),0, -1)\\
  \alpha^f_3(z) &= (0, 1, -1)\\
  \alpha^f_4(z) &= \left(1, \frac{-1}{2}, 0\right)\\
  \alpha^f_5(z) &= \left(1,\displaystyle\frac{-1}{3}, 0\right)  
  \end{align*}
  Let $\eta_1 = \min\{|\zeta|: \zeta \in A\}, \delta_1 = \max\{|\zeta|: \zeta \in A\}$, 
  
  $\eta_2 = \min\{|\zeta|: \zeta \in B\}, \delta_2 = \max\{|\zeta|: \zeta \in B\}$ 
  
  and $\delta_3 = d(A,B) = \inf\{|\zeta - \xi|: \zeta \in A, \xi \in B\}.$
  
  Clearly $0< \eta_1,\delta_1, \eta_2,  \delta_2 <1$ and $0< \delta_3 < 2.$
  
We can compute that for any $f \in \scF$ and $z \in \mathbb{D}$
$$\D\left(H^f_1(z),H^f_2(z), H^f_3(z), H^f_4(z), H^f_5(z)\right)$$
$$= \frac{1}{6^4}\left|\frac{a_1^f(z)}{2} -1\right|\left|\frac{a_2^f(z)}{2} -1\right|\left|\frac{a_1^f(z)}{3} -1\right|\left|\frac{a_2^f(z)}{3} -1\right|\left|a_2^f(z)- a_1^f(z)\right|^3$$
$$\geq \frac{1}{6^4}
\left|1- \frac{\delta_1}{2}\right|
\left|1- \frac{\delta_2}{2}\right|
\left|1- \frac{\delta_1}{3}\right|
\left|1- \frac{\delta_2}{3}\right|
\left|\delta_3\right|^3 = \delta^* \text{(say)} >0 $$
  If we take $\delta = \min \{\delta^*, \displaystyle\frac{1}{\sqrt{3}}, \displaystyle\frac{1}{1+M}\},$ then it can be seen that $\scF^*$ satisfies all the conditions of Theorem \ref{Generalization YPY}. Hence $\scF^*$ is normal and so by lemma \ref{Normality in new representation} $\scF$ is also normal. 
\end{proof}
 
\end{document}